\newtheorem{lemma}{Lemma}
\newtheorem{prop}[lemma]{Proposition}
\newtheorem{rem}[lemma]{Remark}
\newtheorem{thm}[lemma]{Theorem}
\newtheorem{definition}[lemma]{Definition}
\newtheorem{conj}[lemma]{Conjecture}
\DeclareMathOperator{\Var}{Var}
\DeclareMathOperator{\Cov}{Cov}
\DeclareMathOperator{\Lip}{Lip}
\newcommand{\NN}{\mathbb{N}}
\newcommand{\RR}{\mathbb{R}}
\newcommand{\ZZ}{\mathbb{Z}}
\newcommand{\EE}{\mathbb{E}}
\newcommand{\PP}{\mathbb{P}}
\newcommand{\one}{\mathds{1}}
\newcommand{\singleest}[1]{\widehat{p}_{\ifblank{#1}{b_n}{#1}}(u_n)}
\newcommand{\multest}[1]{\widehat{\mathbb{P}}_{T, \ifblank{#1}{b_n}{#1}}(u_n)}
\newcommand{\pconv}{\stackrel{\text{P}}{\longrightarrow}}
\newcommand{\dconv}{\stackrel{\text{d}}{\longrightarrow}}
\newcommand{\Xvec}[1]{\mathbf{X}_{\ifblank{#1}{t}{#1}+T}}
\newcommand{\blarge}{b_{n, 1}}
\newcommand{\bsmall}{b_{n, 2}}
\newcommand{\belowprob}[1]{\mathbb{P}_{X, u}(#1)}
\begin{document}

\begin{frontmatter}

\title{Long Memory of Max-Stable Time Series as Phase Transition: Asymptotic Behaviour of Tail Dependence Estimators}
\runtitle{Long Memory of Max-Stable Time Series as Phase Transition}


\author{\fnms{Marco} \snm{Oesting}\ead[label=e1]{marco.oesting@mathematik.uni-stuttgart.de}}
\address{Stuttgart Center of Simulation Science \& Institute for Stochastics and Applications, \\
University of Stuttgart, \\ Pfaffenwaldring 5a, 70569 Stuttgart, Germany\\ 
	\printead{e1}}
\and
\author{\fnms{Albert} \snm{Rapp}\thanksref{c1} \ead[label=e2]{albert.rapp@uni-ulm.de}}
\address{Universität Ulm, Institut für Stochastik, \\ Helmholtzstraße 18, 89069 Ulm, Germany\\ 
	\printead{e2}}

\thankstext{c1}{Corresponding author}

\runauthor{Oesting \& Rapp}

\begin{abstract}
	In this paper, we consider a simple estimator for tail dependence coefficients of a max-stable time series and show its asymptotic normality under a mild condition.
	The novelty of our result is that this condition does not involve mixing properties that are common in the literature.
	More importantly, our condition is linked to the transition between long and short range dependence (LRD/SRD) for max-stable time series.
    This is based on a recently proposed notion of LRD in the sense of indicators of excursion sets which is meaningfully defined for infinite-variance time series.
    In particular, we show that asymptotic normality with standard rate of convergence and a function of the sum of tail coefficients as asymptotic variance holds if and
    only if the max-stable time series is SRD.
\end{abstract}

\begin{keyword}[class=MSC]
\kwd{60G70}
\kwd{62M10}
\kwd{60F05}
\end{keyword}

\begin{keyword}
\kwd{Long Range Dependence}
\kwd{Limit Theorems}
\kwd{Extreme Value Statistics}
\end{keyword}



\end{frontmatter}

\section{Introduction}
\allowdisplaybreaks

In extreme value analysis, one is typically interested in events which are extreme in the sense that a high threshold $u$ is exceeded, i.e.~events of the form $X_0 > u$ where $X_0$ is part of a time series $(X_t)_{t \in \mathbb{Z}}$.
Although $\lim_{u \to \infty} \PP(X_0 > u) = 0$, one studies the behaviour as $u \to \infty$ because the limiting probability
$$ \chi_h := \lim_{u \to \infty} \PP(X_h >u \mid X_0 > u) \in [0,1] $$
might be positive, provided that it exists. 
Assuming that the limit indeed exists, $\chi_h$ denotes the limiting probability that, conditional on an extreme event at time $t=0$, another extreme event occurs within $h \in \ZZ$ time steps.
If $\chi_h > 0$, the random variables $X_0$ and $X_h$ are called asymptotically dependent, while they are called asymptotically independent if $\chi_h=0$.
For a longer discussion on modelling asymptotic dependence using tail dependence coefficients such as $\chi_h$ let us refer to Chapter 9.5 in \cite{Beirlant.04}.

If $(X_t)_{t \in \ZZ}$ is a max-stable time series, it is well-known that $\chi_h$ always exists and is of the form $ \chi_h = 2 - \theta_h$, $h \in \ZZ, $ where $\theta_h \in [1,2]$ denotes the corresponding pairwise extremal coefficient. The latter coefficient was first defined in \cite{smith90} and is an important characteristic quantity in extreme value statistics. 
In the max-stable case, it has been shown that $\theta_h=2$, or, equivalently, $\chi_h=0$ if and only if $X_0$ and $X_h$ are fully independent. 
Thus, components of a max-stable vector are either asymptotically dependent or fully independent and the pairwise extremal coefficients can be interpreted as a measure for the strength of dependence. 
Hence, the pairwise extremal coefficient has gained a lot of attention and has been thoroughly studied in the literature, e.g.~in \cite{coles-and-tawn96}, \cite{coles93} and \cite{schlather-tawn-02}.

In this paper, we will consider the case that $(X_t)_{t \in \ZZ}$ is a stationary max-stable time series with $\alpha$-Fr\'echet, i.e.~heavy-tailed, marginal distributions and focus on the estimation of limits of conditional exceedance probabilities of the form $\mathbb{P}(X_{t_1} > u, \ldots, X_{t_N} > u \mid X_0 > u)$ as $u \to \infty$ where $0 < t_1 < \ldots < t_N$ for some $N \in \NN$.
This is well-defined as the existence of the limit
\begin{align}
	\label{eq: chi_T def}
	\chi_T 
	:&= 
	\lim_{u \to \infty} \mathbb{P}(X_{t} > u \text{ for all } t \in T \mid  X_0 > u) \\
	&= 
	\lim_{u \to \infty} \frac{\mathbb{P}(X_{t} > u \text{ for all } t \in T )}{\mathbb{P}(X_0 > u)} \in [0, 1],
\end{align}
is ensured for any set $T = \{0, t_1,\ldots,t_N\}$ by \Cref{lem:aux-limits} in the appendix.

In practice, estimation has to be based on a single realization of the time series up to time $n$, i.e. on dependent observations $X_1, \ldots, X_n$. 
A straight-forward estimator can be constructed by replacing the desired probabilities in \eqref{eq: chi_T def} with relative frequencies, i.e.~a natural estimator of $\chi_T$ is given by
\begin{align}
	\widehat 
	\chi_{T ,n} 
	&:= 
	\frac{
		\multest{n} 
	}{
		\singleest{n} 
	} :=
	\frac{
		\frac 1 n \sum_{t=1}^{n} \one\{u_n^{-1} \Xvec{}  \in C_{T}\}
	}{
		\frac 1 n \sum_{t=1}^{n} \one\{u_n^{-1} \Xvec{} \in C_{0}\}
	}
	={} 
	\frac{
		\sum_{t=1}^{n} \one\{u_n^{-1} \Xvec{}  \in C_{T}\}
	}{
		\sum_{t=1}^{n} \one\{u_n^{-1} \Xvec{} \in C_{0}\}
	},
	\label{eq: estimator}
\end{align}
where $\Xvec{} = \big( X_{t+s}\big)_{s \in T}$, for each $t \in \mathbb{Z}$, $(u_n)_{n \in \mathbb{N}}$ is a sequence such that $u_n \rightarrow \infty$ as $n \rightarrow \infty$ and
\begin{align}
	C_0 &= 
	\Big\{ (x_0, \ldots, x_{t_N}) > 0 \, \Big\vert \, x_0 > 1 \Big\}, \\
	C_{T} 
	&= 
	\Big\{ (x_0, \ldots, x_{t_N}) > 0 \, \Big\vert \, x_i > 1 \text{ for all } i \in T\Big\} 
	\subset 
	\mathbb{R}^{t_N + 1}.
	\label{eq: C-sets}
\end{align}
Note that we need $u = u_n$ to depend on $n$ with $u_n \to \infty$ in order to get a good approximation of the limiting quantity $\chi_T$. 
Furthermore, our estimator uses observations up until time point $n + t_N$ rather than just $n$. 
Asymptotically, the use of $t_N$ more observations has no influence but eases notation.
Unfortunately, the larger $u_n$ gets, the less data can be effectively used for our estimation since we do not count those data that are less than $u_n$. 
Thus, the effective size of the sample that we use is only $\sum_{t=1}^{n} \one\{X_t > u_n\}$ that is, on average, $n \PP(X_0 > u_n)$. 
To make sure that the effective sample size grows to $\infty$ as $n \to \infty$, a natural assumption is $\lim_{n \to \infty} n \PP(X_0 > u_n) = \infty$.

The main result of our paper is that we prove asymptotic normality of our estimator \eqref{eq: estimator} in \Cref{thm: asym-normal-chi-SRD}.
The novelty of this result is that \Cref{thm: asym-normal-chi-SRD} characterizes the dependence of the underlying time series in terms of the weak assumption $\sum_{t=1}^\infty \chi_t = \sum_{t=1}^\infty (2-\theta_t) < \infty$ which is equivalent to the finiteness of the asymptotic variance of the estimator.
This assumption is weaker than common assumptions in terms of strong mixing properties involving $\alpha$-mixing coefficients.

More importantly, our condition $\sum_{t=1}^\infty (2-\theta_t) < \infty$ opens a connection to the phenomenon of long range dependence (LRD).
While this phenomenon has seen increasing relevance in other fields, there are only few results relating to LRD of max-stable time series.
Clearly, this is the case because LRD was previously investigated mainly for finite-variance processes in terms of auto-covariance functions.
However, there have been recent investigations into LRD for infinite-variance processes.
For example, \cite{kulik2021long} defines LRD in terms of the covariance of indicators of excursion sets, cf.~\Cref{def: LRD-KS}.
The advantage of this definition is that it allows to investigate LRD for infinite-variance processes because the covariance of indicators is always well-defined as the indicator functions are bounded.
Furthermore, \cite{makogin2021long} have recently shown that max-stable time series are LRD in the sense of \Cref{def: LRD-KS} iff $\sum_{t=1}^\infty (2-\theta_t) = \infty$.
Hence, our condition $\sum_{t=1}^\infty (2-\theta_t) < \infty$ relates to the SRD case.

In addition, our condition marks the point of a phase transition.
Recently, Chapter 9 of \cite{samorodnitsky2016stochastic} describes the phenomenon of LRD as a phase transition after which the convergence rate in a limit theorem changes magnitude.
In this paper, we show that the limiting variance of our estimator's denominator undergoes such a phase transition as $X$ becomes LRD.
Hence, our paper takes the first steps to investigate LRD in an extreme value statistics setting.
Furthermore, we round off our paper by discussing ways to extend our results into the realm of LRD even further.

Our paper is structured as follows.
\Cref{sec: max-stable} reviews well-known facts about max-stable time series that we need in order to prove our main result.
\Cref{sec: var-behavior-SRD} investigates the limiting behavior of $\Var\big(\singleest{}\big)$ and $\Var\big(\, \multest{}\big)$ for arbitrary integer sequences $b_n \rightarrow \infty$ under the SRD assumption $\sum_{t=1}^\infty (2-\theta_t) < \infty$.
In \Cref{sec: asym-normality-SRD},  we prove \Cref{thm: asym-normality-SRD} that shows the asymptotic normality of $\singleest{}$ and $\multest{}$ in the SRD-case. 
This result is then used to prove our main result.
\Cref{sec: conjectures-LRD} proves the aforementioned phase transition and discusses conjectures on extending \Cref{thm: asym-normal-chi-SRD} to the LRD case.
Finally, auxillary lemmas and the proof of \Cref{thm: asym-normality-SRD} can be found in \Cref{sec: proof-asym-normality-SRD}.

\section{Max-Stable Time Series}
\label{sec: max-stable}

Let us review properties of max-stable time series and their extremal coefficients. 
These properties are crucial to prove asymptotic normality of the estimators $\singleest{}$ and $\multest{}$ which is a key step to prove our main result, \Cref{thm: asym-normal-chi-SRD}.

Let $X = (X_t)_{t \in \ZZ}$ be a stationary max-stable time series with $\alpha$-Fr\'echet margins. 
Then, by the spectral representation according to \cite{de1984spectral},
one can write
$$ X_t = \max_{i \in \NN} \Gamma_i^{-\alpha} W_t^{(i)}, \quad t \in \ZZ,$$
where $\Gamma_1 < \Gamma_2 < \ldots$ are the arrival times of a unit rate Poisson process on $(0,\infty)$ and, independently of this Poisson process, $W^{(1)}, W^{(2)}, \ldots$ are independent copies of some stochastic process $W = (W_t)_{t \in \ZZ}$ satisfying $\EE(W_t^{\alpha})=1$ for all $t \in \ZZ$. 
A well-known consequence of this is that for any finite index set $T \subset \mathbb{Z}$, one obtains
\begin{equation} \label{eq:fdd}
\PP( X_t \leq x_t \text{ for all } t \in T) 
= \exp\left( - \EE\left\{ \max_{t \in T} \left(\frac{W_t}{x_t}\right)^\alpha\right\}\right), \quad (x_t)_{t \in T} \in (0,\infty)^{|T|}.
\end{equation}
It can be seen that the joint c.d.f.\ of $(X_t)_{t \in T}$ can be used to define a measure $\mu_T$ on $E_T = [0,\infty)^{|T|} \setminus \{0\}$, the so-called exponent measure, via
$$ \mu_T([0,x]^c) = - \log \PP( X_t \leq x_t \text{ for all } t \in T) = \EE\left\{ \max_{t \in T} \left(\frac{W_t}{x_t}\right)^\alpha\right\}$$
for $x = (x_t)_{t \in T} \in (0,\infty)^{|T|}$. 
By construction, the exponent measure $\mu_T$ satisfies $\mu_T(A) < \infty$ for any measurable set $A \subset E_T$ bounded away from $0 \in \RR^{|T|}$ and is homogeneous of order $-\alpha$, i.e.\ $\mu_T(cA) = c^{-\alpha} \mu_T(A)$ for all $c>0$ and the same sets $A$ as before. Using this definition, Equation \eqref{eq:fdd} implies that
$$ \lim_{u \to \infty} u^\alpha \PP( (X_t)_{t \in T} \in [0,ux]^c ) = \mu_T([0,x]^c), \quad x = (x_t)_{t \in T} \in (0,\infty)^{|T|},$$
or, more generally,
$$ \lim_{u \to \infty} u^\alpha \PP( (X_t)_{t \in T} \in uA ) = \mu(A) $$
for any measurable set $A \subset E_T$ bounded away from $0 \in \RR^{|T|}$ and satisfying $\mu(\partial A) =0$. In particular, this also ensures the existence of the limit
$$ \chi_T = \frac{\mu_{\{0\} \cup T} ((1,\infty) \times \ldots \times (1,\infty)) }{\mu_{\{0\}}((1,\infty))} = \mu_{\{0\} \cup T} ((1,\infty) \times \ldots \times (1,\infty)),$$
where we used that $\mu_{\{0\}}([1,\infty)) = \lim_{u \to \infty} u^\alpha \PP(X_0 > u) = 1$ according to the first part of Lemma \ref{lem:aux-limits}.

A dependence measure that is closely related to $\chi_T$ is the extremal coefficient
$$ \theta_T = \mu_T([0,1]^c) = \EE\left\{ \max\nolimits_{t \in T} W_t^\alpha\right\} \in \big[1,\ |T|\, \big]$$
such that
$$ \PP(X_t \leq x \text{ for all } t \in T) = \exp\left(- \theta_T x^{-\alpha}\right), \quad x >0.$$
In the case that $T=\{0, h\}$ with $h \neq 0$, we have that
 \begin{align*}
  \chi_T ={}& \mu_T((1,\infty) \times (1,\infty)) \\
  ={}& \mu_T((1,\infty) \times [0,\infty)]) + 
 \mu_T([0,\infty) \times (1,\infty)) - \mu_T([0,1]^c)\\
 ={}& \mu_{\{0\}}((1,\infty)) + \mu_{\{h\}}((1,\infty)) - \mu_T([0,1]^c) = 2 - \theta_T =: 2 - \theta_h.
 \end{align*}
Here, we denoted $\theta_T = \theta_h$ for $T = \{0, h\}$.
For more general finite index sets $T$, we obtain
\begin{align*}
	\chi_T ={}& \lim_{u \to \infty} \bigg[\frac{\PP(X_t > u\ \forall\ t \in T \setminus \{0\}, X_0 > u)}{\PP(X_0 > u)} - \underbrace{\PP(X_{t} > u \ \forall\ t  \in T \setminus \{0\})}_{\to 0}\bigg] \\
	={}& \lim_{u \to \infty} \frac{\Cov(\one\{X_t > u \, \forall t \in T \setminus \{0\}\}, \one\{X_0 > u\})}{\PP(X_0 > u)} \\
	={}& \sum_{\substack{C \subset \{0\} \cup (T \setminus \{0\}) \\ 0 \in C, C \cap T \neq \emptyset}}   (-1)^{|C|} \left( 1 + \theta_{C \cap  (T \setminus \{0\})} - \theta_C \right). 
\end{align*}
Let us note that the last equality follows from \Cref{lem:aux-limits} with $A = \{ 0 \}$ and $B = T \setminus \{ 0 \}$.

Finally, let us review the notion of LRD that was discussed in the introduction.
This notion is defined in terms of the covariance of indicators of excursion sets.

\begin{definition}[\cite{kulik2021long}]
	\label{def: LRD-KS}
	A real-valued stationary stochastic process $X = \{ X(t), \, t \in T \}$ where $T$ is an unbounded subset of $\mathbb{R}$ is short range dependent (SRD) if
	\begin{align}
		\int_{T} &\int_{\mathbb{R}} \int_{\mathbb{R}} \Big\vert \Cov (\mathds{1} \{ X(0) > u \}, \mathds{1} \{ X(t) > v \})\Big\vert \, \mu(\mathrm{d} u) \, \mu(\mathrm{d} v)\, \mathrm{d} t < \infty \label{eq: LRD_Spodarev}
	\end{align}
	for any finite measure $\mu$ on $\mathbb{R}$. Otherwise, i.e. if there exists a finite measure $\mu$ such that the integral in inequality \eqref{eq: LRD_Spodarev} is infinite, $X$ is long range dependent. 
	For stochastic processes in discrete time, the integral $\int_T\, \mathrm{d} t$ should be replaced by the summation $\sum_{t \in T}$.
\end{definition}

Notice that this definition is defined even for infinite-variance processes.
Hence, it is also applicable to the max-stable time series we are investigating in this paper.
In fact, due to \cite{makogin2021long} it is known that a max-stable time series is LRD in the sense of \Cref{def: LRD-KS} iff $\sum_{t=1}^\infty (2-\theta_t) = \infty$.
Furthermore, it is worth pointing out that this definition is invariant under strictly monotonic transformations.
This is desirable in an extreme value setting because it means that this notion of LRD does not depend on whether we use $\alpha$-Fréchet, Gumbel or Weibull margins. 

\section{Variance behavior}
\label{sec: var-behavior-SRD}
Let us consider the asymptotic variance behavior of $\singleest{n}$ and $\multest{n}$.
For our purposes it is suitable to replace $n$ by a more flexible sequence $b_n$ that fulfills $b_n \rightarrow \infty$ as $n \rightarrow \infty$. 
We will denote the corresponding estimators by $\singleest{}$ and $\multest{}$, respectively. 
For the estimator $\singleest{}$, we will see that the typical rate of convergence $\sqrt{b_n p(u_n)}$ holds if and only if the time series is short range dependent in the sense of \Cref{def: LRD-KS}, i.e. $\sum_{t=1}^\infty (2-\theta_t) < \infty$.

\begin{prop} \label{prop:pun}
	Let $(X_t)_{t \in \mathbb{Z}}$	be a max-stable stationary time series
	with $\alpha$-Fr\'echet margins and extremal coefficients $(\theta_t)_{t \in \mathbb{Z}}$.
	Then, for any real sequence $u_n \to \infty$ and any integer sequence $b_n \to \infty$, we have
	\begin{align}
		\lim_{n \to \infty} b_n p(u_n) \mathbb{E}\left[ \left( \frac{\singleest{}}{p(u_n)} - 1 \right)^2 \right]
		&= 
		\lim_{n \to \infty} \Var\left(\sqrt{b_n p(u_n)} \frac{\singleest{}}{p(u_n)} \right) \notag \\
		&= 
		1 + 2 \sum_{t=1}^\infty (2-\theta_t).
		\label{eq: var_p_hat}
	\end{align}
	In particular, the variance is finite if and only if $\sum_{t=1}^\infty (2-\theta_t) < \infty$.	
\end{prop}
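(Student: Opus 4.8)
The plan is to rewrite $\singleest{}$ as an empirical average, expand its variance into a diagonal term plus weighted covariances, compute those covariances exactly from the bivariate max-stable distribution \eqref{eq:fdd}, and then pass to the limit by a dominated-convergence argument for series. Recall $p(u_n) = \PP(X_0 > u_n)$ and that $\singleest{} = b_n^{-1}\sum_{t=1}^{b_n}\one\{X_t > u_n\}$ has expectation $p(u_n)$, so $\singleest{}/p(u_n) - 1$ is centered. Consequently, for every $n$ both sides of the first equality in \eqref{eq: var_p_hat} are literally equal to $b_n \Var(\singleest{})/p(u_n)$, so that part is immediate and it suffices to analyse this quantity. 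Using stationarity,
$$\frac{b_n \Var(\singleest{})}{p(u_n)} = \big(1 - p(u_n)\big) + \frac{2}{p(u_n)}\sum_{t=1}^{b_n-1}\Big(1 - \frac{t}{b_n}\Big)\Cov\big(\one\{X_0 > u_n\},\one\{X_t > u_n\}\big),$$
and the first term tends to $1$ since $u_n \to \infty$.

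For the covariances, \eqref{eq:fdd} gives the exact marginal and bivariate laws $\PP(X_0 \le x) = e^{-x^{-\alpha}}$ and $\PP(X_0 \le x, X_t \le x) = e^{-\theta_t x^{-\alpha}}$, whence inclusion--exclusion yields
$$\Cov\big(\one\{X_0 > u_n\},\one\{X_t > u_n\}\big) = e^{-\theta_t u_n^{-\alpha}} - e^{-2 u_n^{-\alpha}} \ge 0,$$
the nonnegativity following from $\theta_t \le 2$. Writing $s_n := u_n^{-\alpha} \to 0$ and $p(u_n) = 1 - e^{-s_n}$, a Taylor expansion of numerator and denominator shows that for each fixed $t$ the ratio $p(u_n)^{-1}\Cov(\one\{X_0>u_n\},\one\{X_t>u_n\}) = (e^{-\theta_t s_n} - e^{-2 s_n})/(1 - e^{-s_n})$ converges to $2 - \theta_t$. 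Moreover, the elementary inequalities $e^x - 1 \le x e^x$ for $x \ge 0$ and $1 - e^{-s} \ge s e^{-s}$ for $s \ge 0$ give $e^{-\theta_t s} - e^{-2s} = e^{-2s}\big(e^{(2-\theta_t)s}-1\big) \le (2-\theta_t)s e^{-\theta_t s} \le (2-\theta_t)s$, hence
$$0 \le \frac{\Cov\big(\one\{X_0 > u_n\},\one\{X_t > u_n\}\big)}{p(u_n)} \le (2-\theta_t)\,e^{s_n} \le e\,(2-\theta_t)$$
for all $n$ large enough that $u_n \ge 1$ — a bound uniform in $n$, and summable over $t$ precisely when $\sum_{t=1}^{\infty}(2-\theta_t) < \infty$.

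To conclude, set $a_{n,t} := (1 - t/b_n)\,\one\{1 \le t \le b_n - 1\}\,p(u_n)^{-1}\Cov(\one\{X_0>u_n\},\one\{X_t>u_n\}) \ge 0$; since $b_n \to \infty$ we have $a_{n,t} \to 2 - \theta_t$ for every fixed $t$. If $\sum_{t=1}^{\infty}(2-\theta_t) < \infty$, then $0 \le a_{n,t} \le e(2-\theta_t)$ is dominated by a summable sequence, so the dominated convergence theorem for series (counting measure on $\NN$) gives $\sum_t a_{n,t} \to \sum_{t=1}^{\infty}(2-\theta_t)$, whence $b_n \Var(\singleest{})/p(u_n) \to 1 + 2\sum_{t=1}^{\infty}(2-\theta_t)$, which is \eqref{eq: var_p_hat}. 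If instead $\sum_{t=1}^{\infty}(2-\theta_t) = \infty$, Fatou's lemma for series yields $\liminf_n \sum_t a_{n,t} \ge \sum_t \liminf_n a_{n,t} = \sum_{t=1}^{\infty}(2-\theta_t) = \infty$, so the limit in \eqref{eq: var_p_hat} equals $+\infty$; together these give the ``in particular'' statement. The only step needing genuine care is producing the uniform summable majorant for the normalised covariances — that is, the two exponential inequalities above — everything else being routine manipulation of the variance of a stationary sum and interchange of limit and summation.
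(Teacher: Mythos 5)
Your proof is correct and follows essentially the same route as the paper: decompose the variance by stationarity, send the diagonal term to $1$, and handle the off-diagonal sum by dominated convergence in the summable case and a Fatou/truncation argument in the divergent case. The only difference is cosmetic — you derive the bivariate covariance in closed form and prove the summable majorant $(2-\theta_t)e^{s_n}$ by elementary exponential inequalities (which also gives you nonnegativity without invoking positive association), whereas the paper cites its auxiliary Lemmas for the same bounds.
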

\begin{proof}
	Using the stationarity, we calculate that
	\begin{align*}
		& \Var\left(\sqrt{b_n p(u_n)} \frac{\singleest{}}{p(u_n)} \right) \\
		=& \frac {1}{b_n p(u_n)} \sum_{j=1}^{b_n} \sum_{k=1}^{b_n} \Cov(\one\{X_j > u_n\}, \one\{X_k > u_n\}) \\
		=& \frac {\Var(\one\{X_0 > u_n\})} {p(u_n)} 
		+ \frac 2 {b_n p(u_n)} \sum_{t=1}^{b_n-1} (b_n - t) \Cov(\one\{X_0 > u_n\}, \one\{X_t > u_n\}) \\
		=& \frac {\Var(\one\{X_0 > u_n\})}{p(u_n)}
		+ \frac 2 {p(u_n)} \sum_{t=1}^{b_n-1} \left(1 - \frac{t}{b_n}\right) \Cov(\one\{X_0 > u_n\}, \one\{X_t > u_n\})	
	\end{align*}	
	\textbf{First Case:} $\sum_{t=1}^\infty (2 - \theta_t) < \infty$\\
	In this case, we can analyze the two terms in
	\begin{align*}
		& \Var\left(\sqrt{b_n p(u_n)} \frac{\singleest{}}{p(u_n)} \right) \\
		={}& \frac { \Var(\one\{X_0 > u_n\})}{p(u_n)}
		+ \frac 2 {p(u_n)} \sum_{t=1}^{b_n-1} \left(1 - \frac{t}{b_n}\right) \Cov(\one\{X_0 > u_n\}, \one\{X_t > u_n\}). 	
	\end{align*}
	separately.\\[1mm]
	Using that 	$\Var(\one\{X_0 > u_n\}) = \mathbb{P}(X_0 > u_n) - \mathbb{P}(X_0 > u_n)^2 = p(u_n) - p(u_n)^2$, the first term is equal to $1 - p(u_n)$ which converges to $1$ as $n \to \infty$. \\
	For the second term, we notice that each summand is bounded by
	$$| \Cov(\one\{X_0 > u_n\}, \one\{X_t > u_n\}) | \leq{} u_n^{-\alpha} (2-\theta_t), $$
	due to the the second part of  \Cref{lem:aux-bounds}. 
    From the first part of \Cref{lem:aux-bounds}, we know that $u_n^{-\alpha}/p(u_n)$ is bounded and  by assumption we know that $(2-\theta_t)$, $t \in \mathbb{Z}$, is summable. 
    Therefore, we can apply dominated convergence and obtain
	\begin{align*}
		& \lim_{n \to \infty} \frac 2 {p(u_n)} 
		\sum_{t=1}^{b_n-1} \left(1 - \frac{t}{b_n}\right) \Cov(\one\{X_0 > u_n\}, \one\{X_t > u_n\}) \\
		={}& 2 \sum_{t=1}^\infty \lim_{n \to \infty} \frac{\Cov(\one\{X_0 > u_n\}, \one\{X_t > u_n\})}{p(u_n)} \\
		={}& 2 \sum_{t=1}^\infty \lim_{n \to \infty} \big(\mathbb{P}(X_t > u_n \mid X_0 > u_n) - p(u_n)\big) = 2 \sum_{t=1}^\infty (2 - \theta_t),
	\end{align*}
	where we used that  $\mathbb{P}(X_t > u_n \mid X_0 > u_n) \to 2 - \theta_t$ as $n \to \infty$. Consequently, we obtain
	$$ \lim_{n \to \infty} \Var\left(\sqrt{b_n p(u_n)} \left(\frac{\singleest{}}{p(u_n)} - 1\right) \right) = 1 + 2 \sum_{t=1}^\infty (2-\theta_t). $$
	
	\noindent \textbf{Second Case:} $\sum_{t=1}^\infty (2 - \theta_t) = \infty$\\
	We can use the fact $\Cov(\one\{X_0 > u_n\}, \one\{X_t > u_n\}) \geq 0$ as the max-stable time series $(X_t)_{t \in \mathbb{Z}}$ is positively associated. Thus, for any fixed $k \in \mathbb{N}$ and sufficiently large $n$, we have
	\begin{align*}
		& \Var\left(\sqrt{b_n p(u_n)} 
		\frac{\singleest{}}{p(u_n)} \right) \\
		\geq{}& \frac {1}{p(u_n)} \Var(\one\{X_0 > u_n\})
		+ \frac 2 {p(u_n)} \sum_{t=1}^{k} \left(1 - \frac{t}{b_n}\right) \Cov(\one\{X_0 > u_n\}, \one\{X_t > u_n\}). 	
	\end{align*}
	Using the same calculations as above (here dominated convergence does not need to be applied as the sum consists of a fixed number of summands), we obtain
	$$ \lim_{n \to \infty} \Var\left(\sqrt{b_n p(u_n)} \frac{\singleest{}}{p(u_n)} \right) \geq 1 + 2 \sum_{t=1}^k (2-\theta_t). $$
	As this holds for any $k \in \mathbb{N}$, we finally get
	$$ \lim_{n \to \infty} \Var\left(\sqrt{b_n p(u_n)} \frac{\singleest{}}{p(u_n)} \right) 
	\geq 1 + 2 \sum_{t=1}^\infty (2-\theta_t) = \infty. $$ 
\end{proof}

Next, let us consider the numerator of our estimator \eqref{eq: estimator}.
To do so,  denote
\begin{align}
    \label{eq: kappa}
	\kappa_{t, T} = \lim_{u \to \infty} \frac{\mathbb{P}(u^{-1} (\Xvec{0},\ \Xvec{t}) \in C_{T}^2)}{\mathbb{P}(X_0 > u)}, 
	\quad
	t \in \mathbb{N}_0,
\end{align}
where $(X_t)_{t \in \mathbb{Z}}$ is a stationary max-stable time series and $C_{T}$  is defined in \eqref{eq: C-sets}. 
These quantities always exist as they can be expressed in terms of the exponent measure of the underlying time series.

\begin{prop} \label{prop:phun}
	Let $(X_t)_{t \in \mathbb{Z}}$	be a max-stable stationary time series
	with $\alpha$-Fr\'echet margins and pairwise extremal coefficients $(\theta_t)_{t \in \mathbb{Z}}$. Assume that $\sum_{t=1}^\infty (2-\theta_t) < \infty$ and choose a real sequence $u_n \to \infty$ as well as an integer sequence $b_n \to \infty$. 
	Then, we have
	$$ \lim_{n \to \infty} \Var\left( \sqrt{b_n p(u_n)} \frac{\multest{b_n} }{p(u_n)} \right) = \chi_{T}  + 2 \sum_{t=1}^\infty \kappa_{t, T}, $$
    where $\kappa_{t, T}$ is defined as in \eqref{eq: kappa}.
\end{prop}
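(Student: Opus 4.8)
The plan is to follow the argument of \Cref{prop:pun}. Writing $\multest{b_n}=\tfrac{1}{b_n}\sum_{t=1}^{b_n}\one\{u_n^{-1}\Xvec{t}\in C_T\}$ and using stationarity, I would first expand
\begin{align*}
	\Var\Bigl(\sqrt{b_n p(u_n)}\,\tfrac{\multest{b_n}}{p(u_n)}\Bigr)
	&= \frac{\Var\bigl(\one\{u_n^{-1}\Xvec{0}\in C_T\}\bigr)}{p(u_n)}\\
	&\quad+ \frac{2}{p(u_n)}\sum_{t=1}^{b_n-1}\Bigl(1-\tfrac{t}{b_n}\Bigr)\Cov\bigl(\one\{u_n^{-1}\Xvec{0}\in C_T\},\one\{u_n^{-1}\Xvec{t}\in C_T\}\bigr),
\end{align*}
exactly as in that proof. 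Because $\PP(u_n^{-1}\Xvec{0}\in C_T)=\PP(X_s>u_n\text{ for all }s\in T)\to0$ while $\PP(u_n^{-1}\Xvec{0}\in C_T)/p(u_n)\to\chi_T$ by \eqref{eq: chi_T def}, the first term converges to $\chi_T$. For a fixed lag $t$, stationarity lets me write the covariance as $\PP(u_n^{-1}(\Xvec{0},\Xvec{t})\in C_T^2)-\PP(u_n^{-1}\Xvec{0}\in C_T)^2$; dividing by $p(u_n)$, the first part tends to $\kappa_{t,T}$ by \eqref{eq: kappa} and the second to $\chi_T\cdot0=0$, so each individual summand divided by $p(u_n)$ converges to $\kappa_{t,T}$.

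The heart of the proof is then to interchange $\lim_{n\to\infty}$ with the summation over $t$ via dominated convergence on $\NN$ with counting measure, and this is where the hypothesis $\sum_{t=1}^\infty(2-\theta_t)<\infty$ is used. I would obtain a uniform, summable bound on the covariances from the extremal-coefficient representation. Expanding $\one\{u_n^{-1}\Xvec{t}\in C_T\}=\prod_{s\in T}\one\{X_{t+s}>u_n\}=\sum_{S\subseteq t+T}(-1)^{|S|}\one\{X_k\le u_n\text{ for all }k\in S\}$, and analogously for the window around $0$, and using \eqref{eq:fdd} in the form $\PP(X_k\le u_n\text{ for all }k\in S)=\exp(-\theta_S u_n^{-\alpha})$, I get
\begin{align*}
	&\Cov\bigl(\one\{u_n^{-1}\Xvec{0}\in C_T\},\one\{u_n^{-1}\Xvec{t}\in C_T\}\bigr)\\
	&\qquad=\sum_{S\subseteq T}\sum_{S'\subseteq t+T}(-1)^{|S|+|S'|}\Bigl(e^{-\theta_{S\cup S'}u_n^{-\alpha}}-e^{-(\theta_S+\theta_{S'})u_n^{-\alpha}}\Bigr).
\end{align*}
Subadditivity $\theta_{S\cup S'}\le\theta_S+\theta_{S'}$ makes every bracket nonnegative and, by the mean value theorem, at most $(\theta_S+\theta_{S'}-\theta_{S\cup S'})u_n^{-\alpha}$. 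Since $\theta_A=\EE[\max_{a\in A}W_a^\alpha]$ and $\max(x,y)=x+y-\min(x,y)$, one has $\theta_S+\theta_{S'}-\theta_{S\cup S'}=\EE\bigl[\min\bigl(\max_{i\in S}W_i^\alpha,\max_{j\in S'}W_j^\alpha\bigr)\bigr]$, and as the minimum of two maxima is itself one of the pairwise minima (all terms being nonnegative), $\theta_S+\theta_{S'}-\theta_{S\cup S'}\le\sum_{i\in S}\sum_{j\in S'}\EE[\min(W_i^\alpha,W_j^\alpha)]=\sum_{i\in S}\sum_{j\in S'}(2-\theta_{|i-j|})$. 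Summing over $S\subseteq T$ and $S'\subseteq t+T$ then gives $|\Cov(\one\{u_n^{-1}\Xvec{0}\in C_T\},\one\{u_n^{-1}\Xvec{t}\in C_T\})|\le4^{|T|-1}u_n^{-\alpha}\sum_{s\in T}\sum_{s'\in T}(2-\theta_{|t+s'-s|})$.

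Since $0\le1-\tfrac{t}{b_n}\le1$ and $u_n^{-\alpha}/p(u_n)$ is bounded by the first part of \Cref{lem:aux-bounds}, the $t$-th summand of the covariance sum is thus dominated, uniformly in $n$, by $C\sum_{s,s'\in T}(2-\theta_{|t+s'-s|})$ for a finite constant $C$; as $T$ is finite and $\sum_{t=1}^\infty(2-\theta_t)<\infty$, this dominating sequence is summable over $t\in\NN$. Dominated convergence then yields that the covariance sum converges to $2\sum_{t=1}^\infty\kappa_{t,T}$, and adding the limit $\chi_T$ of the first term gives the asserted identity (and in particular $\chi_T+2\sum_{t}\kappa_{t,T}<\infty$). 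I expect the covariance bound above to be the only genuine obstacle: the naive estimate $\PP(u_n^{-1}(\Xvec{0},\Xvec{t})\in C_T^2)\le\PP(X_0>u_n,X_t>u_n)$ throws away a contribution of order $p(u_n)^2$ per lag, which fails to be summable after division by $p(u_n)$, whereas the inclusion–exclusion identity retains exactly the cancellation that produces a $(2-\theta_\cdot)$-type bound.
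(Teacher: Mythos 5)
Your proof is correct and its overall architecture matches the paper's: expand the variance via stationarity, identify the limit of the diagonal term as $\chi_T$ and of each off-diagonal summand as $\kappa_{t,T}$, and justify the interchange of limit and sum through a covariance bound of order $u_n^{-\alpha}\sum_{s,s'}(2-\theta_{|t+s'-s|})$. Where you genuinely diverge is in how that bound is obtained and how the interchange is executed. The paper simply invokes its Lemma~\ref{lem:aux-bounds}(a), whose proof rests on the Schlather--Tawn inequality \eqref{eq: tawn-bound} to decompose $\theta_{C\cap A}+\theta_{C\cap B}-\theta_C$ into pairwise terms; you instead re-derive the same bound (with the same constant $4^{|T|-1}=2^{2|T|-2}$) from the spectral representation, using $\theta_S+\theta_{S'}-\theta_{S\cup S'}=\EE\bigl[\min\bigl(\max_{i\in S}W_i^\alpha,\max_{j\in S'}W_j^\alpha\bigr)\bigr]$ and $\EE[\min(W_i^\alpha,W_j^\alpha)]=2-\theta_{|i-j|}$. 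This is arguably cleaner: it avoids the iterative application of \eqref{eq: tawn-bound} and, unlike the statement of Lemma~\ref{lem:aux-bounds}, does not require the two index windows to be disjoint, so no special handling of small lags $t\le t_N$ is needed (the finitely many overlapping terms are absorbed into the summable dominating sequence). Finally, you close with dominated convergence over $t\in\NN$, exactly as in the first case of Proposition~\ref{prop:pun}, whereas the paper's proof of Proposition~\ref{prop:phun} uses the equivalent truncation-at-$k$ argument with a tail bound $K\sum_{t>k}(2-\theta_t)$; both are valid. Your closing remark correctly identifies why the naive bound $\PP(X_0>u_n,X_t>u_n)$ is insufficient and why the inclusion--exclusion cancellation is the essential ingredient.
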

\begin{proof}
	We have that 	
	\begin{align*}
		& \Var\left( \sqrt{b_n p(u_n)} \frac{\multest{b_n} }{p(u_n)} \right)\\
		={}&
		\frac 1 {b_n} \frac 1 {p(u_n)} \sum_{s=0}^{b_n} \sum_{t=0}^{b_n} \Cov(\one\{u_n^{-1} \Xvec{s}  \in C_{T}\}, \one\{u_n^{-1} \Xvec{}  \in C_{T}\}) \\
		={}& \frac {1}{p(u_n)} \Var(\one\{u_n^{-1} \Xvec{0}  \in C_{T}\}) \\ &+
		\frac 1 {b_n} \frac 2 {p(u_n)} \sum_{t=1}^{b_n - 1} (b_n - t) \Cov(\one\{u_n^{-1} \Xvec{0}  \in C_{T}\}, \one\{u_n^{-1} \Xvec{t}  \in C_{T}\}) \\
		={}& \frac {1} {p(u_n)} \Var(\one\{u_n^{-1} \Xvec{0}  \in C_{T}\})\\ & +
		\frac 2 {p(u_n)} \sum_{t=1}^{k} \left(1 - \frac t {b_n}\right) \Cov(\one\{u_n^{-1} \Xvec{0}  \in C_{T}\}, \one\{u_n^{-1} \Xvec{t}  \in C_{T}\})\\ &  +
		\frac 2 {p(u_n)} \sum_{t=k+1}^{b_n} \left(1 - \frac t {b_n}\right) \Cov(\one\{u_n^{-1} \Xvec{0}  \in C_{T}\}, \one\{u_n^{-1} \Xvec{t}  \in C_{T}\})
	\end{align*} 
	For fixed $k \in \mathbb{N}$, the sum of the first and the second term converges to
	$$ \chi_{T}  + 2 \sum_{t=1}^k \kappa_{t, T} $$
	(details similar to the proof of Proposition \ref{prop:pun}).
	From \Cref{lem:aux-bounds} it follows that that we can bound the third term by
	$K \cdot \sum_{t=k+1}^\infty (2 - \theta_t)$ for some constant $K > 0$.
	As this bound goes to zero as $k \to \infty$, taking the limit superior as $k \to \infty$ gives the desired result.    
\end{proof}

\begin{rem} \label{rem: well-definedness of asymptotic variances}
	\begin{enumerate}[(a)]
		\item Clearly, $\sum_{t=1}^\infty \kappa_{t, T} \leq \sum_{t=1}^\infty (2-\theta_t)$. 
		Hence, under the assumptions of \Cref{prop:phun}, the limiting variance of our estimator's numerator and denominator always exist (provided they are properly normalized).
		\item \label{rem: bias-negligible} If $u_n \rightarrow \infty$ sufficiently fast, we have that the bias of $\multest{} / p(u_n)$ is asymptotically negligible. 
		More specifically, if $b_np(u_n)^{3} \rightarrow 0$ as $n \rightarrow \infty$ we have that 
		\begin{align}
			\label{eq: var_PPhat}
			\lim_{n \to \infty} b_n p(u_n) \mathbb{E}\left[\left( \frac{\multest{b_n}}{p(u_n)} - \chi_T \right)^2\right] 
			= 
			\lim_{n \to \infty} \Var\left( \sqrt{b_n p(u_n)} \frac{\multest{b_n} }{p(u_n)} \right).
		\end{align}
		This follows from $\chi_T = \chi_{\{0\},\, T \setminus \{0\}}$ and \Cref{lem:aux-bounds}\eqref{lem: conv-bound-covariance} which implies that
		\begin{align*}
			\bigg\vert
			\sqrt{b_n p(u_n)} \bigg( 
			\mathbb{E} \bigg[\frac{\multest{}}{p(u_n)}\bigg] - \chi_{\{0\},\, T \setminus \{0\}}
			\bigg)
			\bigg\vert
			\leq \sqrt{b_n p(u_n)^3} \bigg( 1 + K  \frac{u_n^{-2\alpha}}{p(u_n)^2}\bigg)
		\end{align*}
		for some constant $K > 0$.
		The claim now follows from \Cref{lem:aux-limits}.
		\item From combining Prop.~\ref{prop:pun} and \ref{prop:phun} with Markov's inequality we get that 
      $$ \singleest{} / p(u_n) \pconv{} 1 \quad \text{and} \quad \multest{b_n}/p(u_n) \pconv{} \chi_{T}$$
      and, consequently, 
      $$ \multest{b_n} / \singleest{} \pconv{} \chi_{T} $$ 
       as $n \rightarrow \infty$ if $b_np(u_n) \rightarrow \infty$ as $n \rightarrow \infty$.
	\end{enumerate}
\end{rem}

\section{Asymptotic normality}
\label{sec: asym-normality-SRD}

As we've concluded in the last section, the properly normalized numerator and denominator of our estimator \eqref{eq: estimator} are weakly consistent under SRD.
In this section, we want to show that they are also asymptotically normally distributed.
Furthermore, we use this to prove that our ratio estimator is asymptocially normally distributed too.

\begin{thm} \label{thm: asym-normality-SRD}
	Let $(X_t)_{t \in \mathbb{Z}}$	be a max-stable stationary time series with $\alpha$-Fr\'echet margins and pairwise extremal coefficients $\theta_t$. 
	Assume that $\sum_{t=1}^\infty (2-\theta_t) < \infty$.
	Then, for any sequence $u_n \to \infty$ such that $n p(u_n) \rightarrow \infty$ as $n \rightarrow \infty$, we have
	\begin{align*}
		\sqrt{n p(u_n)} \bigg( \frac{\singleest{n}}{p(u_n)}  - 1 \bigg) &\dconv{} \mathcal{N}(0, \sigma_0^2) \\[2mm]
		\sqrt{n p(u_n)} \bigg( \frac{\multest{n}}{p(u_n)}  - \chi_{T} \bigg)  &\dconv{} \mathcal{N}(0, \sigma_{T}^2),
	\end{align*}
	as $n \rightarrow \infty$, where $\sigma_0^2$ and $\sigma_{T}^2$ are the limiting variances as defined in Propositions \ref{prop:pun} and \ref{prop:phun}.
\end{thm}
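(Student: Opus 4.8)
The plan is to prove both convergences simultaneously by reducing each to a central limit theorem for a single normalized sum of centred indicators. Write $Y_{n,t}=\one\{X_t>u_n\}$ in the first case and $Y_{n,t}=\one\{u_n^{-1}\Xvec{}\in C_{T}\}=\prod_{s\in T}\one\{X_{t+s}>u_n\}$ in the second, and set $S_n=\sum_{t=1}^{n}\bigl(Y_{n,t}-\EE Y_{n,t}\bigr)$. In the first case $\EE\singleest{n}=p(u_n)$ exactly, so $\sqrt{np(u_n)}\bigl(\singleest{n}/p(u_n)-1\bigr)=S_n/\sqrt{np(u_n)}$; in the second, $\sqrt{np(u_n)}\bigl(\multest{n}/p(u_n)-\chi_T\bigr)=S_n/\sqrt{np(u_n)}+\sqrt{np(u_n)}\bigl(\EE\multest{n}/p(u_n)-\chi_T\bigr)$, where the last, deterministic term is negligible by \Cref{rem: well-definedness of asymptotic variances}\eqref{rem: bias-negligible} together with \Cref{lem:aux-limits}. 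Hence it suffices to show $S_n/\sqrt{np(u_n)}\dconv\mathcal N(0,\sigma^2)$ with $\sigma^2=\sigma_0^2$, resp.\ $\sigma^2=\sigma_T^2$; the corresponding variance convergence $\Var\bigl(S_n/\sqrt{np(u_n)}\bigr)\to\sigma^2$ is precisely \Cref{prop:pun}, resp.\ \Cref{prop:phun}, so only the Gaussian limit remains to be established.

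Since no mixing assumption is available, the structural input I would use is that $(X_t)_{t\in\ZZ}$ is positively associated and hence so is $(Y_{n,t})_t$, each $Y_{n,t}$ being a non-decreasing function of the $X$'s. The argument is Bernstein's big-block--small-block scheme combined with Newman's covariance inequality for associated random variables. Fix integer block lengths with $q_n\to\infty$, $q_n=o(p_n)$ and $p_n=o\bigl(\sqrt{np(u_n)}\bigr)$ --- possible because $np(u_n)\to\infty$ --- partition $\{1,\dots,n\}$ into alternating big blocks $J_1,J_2,\dots$ of length $p_n$ and small blocks $I_1,I_2,\dots$ of length $q_n$, and put $\xi_{n,j}=\sum_{t\in J_j}\bigl(Y_{n,t}-\EE Y_{n,t}\bigr)$. \textbf{Step 1 (the small blocks are negligible).} Arguing as in the proof of \Cref{prop:pun} and bounding each covariance via \Cref{lem:aux-bounds} by a constant times $u_n^{-\alpha}(2-\theta_{|t-s|})$ (up to the fixed shift $t_N$ in the numerator case), using that $u_n^{-\alpha}/p(u_n)$ is bounded and that two indices lying in distinct small blocks are at distance larger than $p_n$, one obtains $\Var\bigl(\sum_\ell\sum_{t\in I_\ell}(Y_{n,t}-\EE Y_{n,t})\bigr)\le K\,(q_n/p_n)\,np(u_n)=o\bigl(np(u_n)\bigr)$ for some $K>0$, so the total small-block contribution is $o_P\bigl(\sqrt{np(u_n)}\bigr)$ by Chebyshev's inequality.

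\textbf{Step 2 (decoupling the big blocks).} The $\xi_{n,j}$ are associated, being non-decreasing functions of disjoint blocks of $(X_t)$, so Newman's inequality yields, for every $\lambda\in\RR$,
\begin{align*}
	\Bigl|\,\EE\exp\bigl(i\lambda(np(u_n))^{-1/2}\textstyle\sum_{j}\xi_{n,j}\bigr) &-\prod_{j}\EE\exp\bigl(i\lambda(np(u_n))^{-1/2}\xi_{n,j}\bigr)\Bigr| \\
	&\le\frac{\lambda^2}{np(u_n)}\sum_{j<l}\Cov(\xi_{n,j},\xi_{n,l}).
\end{align*}
As distinct big blocks are separated by a small block of length $q_n$, \Cref{lem:aux-bounds} bounds the double sum by a constant times $u_n^{-\alpha}\,n\sum_{r>q_n}(2-\theta_r)$, so the right-hand side is $O\bigl(\sum_{r>q_n}(2-\theta_r)\bigr)\to0$ because $\sum_t(2-\theta_t)<\infty$ and $q_n\to\infty$. \textbf{Step 3 (CLT for the decoupled array).} Let $\tilde\xi_{n,j}$ be independent with $\tilde\xi_{n,j}\stackrel{d}{=}\xi_{n,j}$. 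Then $(np(u_n))^{-1/2}\sum_j\tilde\xi_{n,j}\dconv\mathcal N(0,\sigma^2)$ by the Lindeberg--Feller theorem: the variances satisfy $(np(u_n))^{-1}\sum_j\Var(\xi_{n,j})=(np(u_n))^{-1}\Var\bigl(\sum_j\xi_{n,j}\bigr)-2(np(u_n))^{-1}\sum_{j<l}\Cov(\xi_{n,j},\xi_{n,l})\to\sigma^2-0$ by Steps 1 and 2 together with \Cref{prop:pun}/\Cref{prop:phun}, and the Lindeberg condition is trivial since $|\xi_{n,j}|\le p_n=o\bigl(\sqrt{np(u_n)}\bigr)$, so the truncation sets are eventually empty.

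Assembling the pieces, $S_n/\sqrt{np(u_n)}$ equals $(np(u_n))^{-1/2}\sum_j\xi_{n,j}$ plus the small-block remainder, which is $o_P(1)$ by Step 1; by Step 2 and Lévy's continuity theorem the first term has the same weak limit as $(np(u_n))^{-1/2}\sum_j\tilde\xi_{n,j}$, namely $\mathcal N(0,\sigma^2)$ by Step 3; an application of Slutsky's theorem and the reductions of the first paragraph then give the two displayed convergences. I expect Step 2 to be the main obstacle: with no mixing at hand one must trade a quantitative decay rate for the bare summability $\sum_t(2-\theta_t)<\infty$, and this is feasible only because positive association lets Newman's inequality express the decoupling error through $\sum_{j<l}\Cov(\xi_{n,j},\xi_{n,l})$. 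The remaining delicate point is to choose $p_n,q_n$ so that the small-block variance $O(q_n/p_n)$, the decoupling error $O\bigl(\sum_{r>q_n}(2-\theta_r)\bigr)$ and the Lindeberg ratio $O\bigl(p_n/\sqrt{np(u_n)}\bigr)$ all tend to zero simultaneously --- which is compatible exactly because $np(u_n)\to\infty$ --- while keeping track of the vanishing cell probability $p(u_n)$ throughout via the uniform bound on $u_n^{-\alpha}/p(u_n)$ from \Cref{lem:aux-bounds}.
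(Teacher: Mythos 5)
Your proposal follows essentially the same route as the paper's proof: a Bernstein big-block/small-block decomposition, negligibility of the small blocks via the covariance bounds of \Cref{lem:aux-bounds}, decoupling of the big blocks through the positive association of the max-stable series (you invoke Newman's inequality directly, while the paper re-derives it by telescoping characteristic functions and applying the Lipschitz covariance inequality for associated variables from Bulinskii), and finally a Lindeberg CLT for the decoupled i.i.d.\ triangular array with the limiting variance supplied by \Cref{prop:pun} and \Cref{prop:phun}. The only caveat is your reduction of the second convergence to the centred sum, which appeals to \Cref{rem: well-definedness of asymptotic variances}\eqref{rem: bias-negligible} and hence implicitly to $np(u_n)^3\to 0$, a condition not assumed in the theorem --- though the paper's own proof silently centres at the finite-$n$ probability $\mathbb{P}(u_n^{-1}\mathbf{X}_{0+T}\in C_T)$ and leaves the same bias term unaddressed.
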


\begin{proof}
	For legibility this proof has been moved to \Cref{sec: proof-asym-normality-SRD}.
\end{proof}

Finally, let us combine the results from \Cref{thm: asym-normality-SRD} to show that our quotient estimator $\widehat \chi_{T ,n}$ is asymptotically normal.

\begin{thm} \label{thm: asym-normal-chi-SRD}
	Under the assumptions of \Cref{thm: asym-normality-SRD} it holds that
	\begin{align*}
		\sqrt{n p(u_n)} \bigg(\widehat \chi_{T ,n} - \frac{\mathbb{P}(u_n^{-1} \Xvec{} \in  C_{T})}{\mathbb{P}(u_n^{-1} \Xvec{} \in  C_0)} \bigg)
		\dconv
		\mathcal{N}(0, \sigma^2)
	\end{align*}
	as $n \rightarrow \infty$ for some $\sigma^2 > 0$.
	Additionally, if $np(u_n)^3 \rightarrow 0$ as $n \rightarrow \infty$, it holds that
	\begin{align*}
		\sqrt{n p(u_n)} \big(\widehat \chi_{T ,n} - \chi_{T ,n} \big)
		\dconv
		\mathcal{N}(0, \sigma^2)
	\end{align*}
	as $n \rightarrow \infty$.
\end{thm}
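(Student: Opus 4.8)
The plan is to reduce both displays to \Cref{thm: asym-normality-SRD} by linearising the ratio estimator around the ratio of its expectations. Put $D_n := \singleest{n}/p(u_n)$ and $N_n := \multest{n}/p(u_n)$, so that $\widehat\chi_{T,n} = N_n/D_n$. By stationarity one has $\EE D_n = 1$ (since $\{u_n^{-1}\Xvec{}\in C_{0}\} = \{X_t > u_n\}$), $\EE N_n = \PP(u_n^{-1}\Xvec{}\in C_{T})/\PP(u_n^{-1}\Xvec{}\in C_{0})$ is exactly the deterministic centring in the first display, and $\EE N_n \to \chi_T$ by \eqref{eq: chi_T def}. By \Cref{prop:pun} and \Cref{prop:phun}, $\sqrt{np(u_n)}(D_n - 1)$ and $\sqrt{np(u_n)}(N_n - \EE N_n)$ are bounded in $L^2$, hence $O_P(1)$, while $D_n \pconv 1$ by \Cref{rem: well-definedness of asymptotic variances}. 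Starting from the algebraic identity $N_n/D_n - \EE N_n = D_n^{-1}\big[(N_n - \EE N_n) - \EE N_n(D_n - 1)\big]$ I would obtain
\begin{align*}
  \sqrt{np(u_n)}\big(\widehat\chi_{T,n} - \EE N_n\big) = S_n + R_n, \qquad S_n := \sqrt{np(u_n)}\big[(N_n - \EE N_n) - \chi_T(D_n - 1)\big],
\end{align*}
with $R_n = \sqrt{np(u_n)}(N_n - \EE N_n)\tfrac{1-D_n}{D_n} + \sqrt{np(u_n)}(D_n - 1)\big(\chi_T - \tfrac{\EE N_n}{D_n}\big)$; since $D_n\pconv 1$ and $\EE N_n\to\chi_T$, each summand of $R_n$ is a product of an $O_P(1)$ factor and an $o_P(1)$ factor, so Slutsky's lemma gives $R_n\pconv 0$. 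It then suffices to show $S_n \dconv \mathcal N(0,\sigma^2)$ for a suitable $\sigma^2 > 0$.

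Now $S_n = (np(u_n))^{-1/2}\sum_{t=1}^{n}(\xi_t - \EE\xi_t)$ with $\xi_t := \one\{u_n^{-1}\Xvec{}\in C_{T}\} - \chi_T\,\one\{u_n^{-1}\Xvec{}\in C_{0}\}$, which is a \emph{uniformly bounded} functional of the fixed-width window $(X_t,\dots,X_{t+t_N})$. Since marginal asymptotic normality of $N_n$ and $D_n$ does not by itself yield asymptotic normality of $S_n$, the step I expect to be the main obstacle is to re-run the proof of \Cref{thm: asym-normality-SRD} (given in \Cref{sec: proof-asym-normality-SRD}) for $S_n$: that proof uses only that the summands are uniformly bounded, measurable with respect to a window of fixed width, and have summable lagged covariances, and all three properties hold for $\xi_t$ --- the last one by the multivariate version of \Cref{lem:aux-bounds}, i.e.\ the estimate that already controls the ``third term'' in the proof of \Cref{prop:phun}. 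Hence I expect that proof to apply verbatim to $S_n$ (equivalently, by the Cram\'er--Wold device, to every linear combination $a\,\one\{u_n^{-1}\Xvec{}\in C_{T}\} + b\,\one\{u_n^{-1}\Xvec{}\in C_{0}\}$, giving the bivariate central limit theorem for $\sqrt{np(u_n)}(N_n - \EE N_n,\, D_n - 1)$), so that $S_n \dconv \mathcal N(0,\sigma^2)$ with
\begin{align*}
  \sigma^2 = \lim_{n\to\infty} np(u_n)\Var\big(N_n - \chi_T D_n\big) = \sigma_T^2 - 2\chi_T c + \chi_T^2\sigma_0^2,
\end{align*}
where $c := \lim_{n\to\infty} np(u_n)\Cov(N_n, D_n)$ exists by the dominated-convergence argument in \Cref{prop:pun} and the existence of the limit defining $\sigma^2$ follows exactly as in \Cref{prop:phun}. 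Finiteness of $\sigma^2$ is clear from $0\le\kappa_{t,T}\le 2-\theta_t$ (cf.\ \Cref{rem: well-definedness of asymptotic variances}) and $|c|\le\sigma_0\sigma_T$, and strict positivity is checked directly from this expression, excluding the degenerate boundary cases $\chi_T\in\{0,1\}$ in which $\widehat\chi_{T,n}$ is asymptotically constant. Combined with the first paragraph, this yields the first assertion.

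For the second assertion, I would impose the additional hypothesis $np(u_n)^3\to 0$ and split $\widehat\chi_{T,n} - \chi_{T,n} = (\widehat\chi_{T,n} - \EE N_n) + (\EE N_n - \chi_{T,n})$. By \Cref{rem: well-definedness of asymptotic variances}\eqref{rem: bias-negligible}, which via \Cref{lem:aux-bounds} bounds $|\EE N_n - \chi_{T,n}|$ by a constant multiple of $p(u_n)$, the deterministic term satisfies $\sqrt{np(u_n)}\,|\EE N_n - \chi_{T,n}| \le K\sqrt{n\,p(u_n)^3} \to 0$ for some constant $K$, whence the second assertion follows from the first and Slutsky's lemma.
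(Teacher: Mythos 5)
Your proposal is correct and follows essentially the same route as the paper: both reduce the problem to a CLT for linear combinations of the two indicator sums via the large/small block argument of \Cref{thm: asym-normality-SRD} (Cram\'er--Wold), handle the ratio by a Slutsky-type linearisation with asymptotic variance $F'\Sigma F$ for $F=(1,-\chi_T)'$, and dispose of the bias under $np(u_n)^3\to 0$ using \Cref{rem: well-definedness of asymptotic variances}\eqref{rem: bias-negligible}. The only differences are cosmetic --- you linearise with the fixed coefficient $-\chi_T$ and push the discrepancy into an explicit $o_P(1)$ remainder, whereas the paper keeps the random coefficients $p(u_n)/\singleest{n}$ and $-P_n(C_T)/p(u_n)$ and invokes the bivariate CLT directly --- and your remark that $\sigma^2>0$ fails in the degenerate case $\chi_T=0$ is a point the paper glosses over.
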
 

\begin{rem}
    The novelty of this result is the weak assumption $\sum_{t=1}^\infty (2-\theta_t) < \infty$ on the dependence structure of the time series.   
    Typically, asymptotic normality is proven under the assumption of strong mixing properties of the time series. 
    For example, \cite{davismikosch09} proves asymptotic normality of a more general version of our estimator \eqref{eq: estimator} under the assumption that $X$ is, among other assumptions, strongly mixing with mixing rate $(\alpha_t)_{t \in \mathbf{Z}}$ that decays sufficiently fast such that
    \begin{align*}
	   \lim_{n \rightarrow \infty} m_n \sum_{t = r_n}^\infty \alpha_t = 0,
    \end{align*}
    where $m_n, r_n \rightarrow \infty$ with $m_n / n \rightarrow 0$ and $r_n / m_n \rightarrow 0$ as $n \rightarrow \infty$.
    Compared to this, our assumption is much simpler and also weaker as these mixing assumptions in particular imply that the limiting variance $\sigma_0^2$ in \Cref{thm: asym-normality-SRD} is finite.
\end{rem}

\begin{proof}
	First, let us show the following multivariate CLT:
    For
	\begin{align*}
		S_n := (n p(u_n))^{-1/2} \sum_{t = 1}^{n} \begin{pmatrix}
			\one\{u_n^{-1} \Xvec{}  \in C_{T}\} -
			\mathbb{P}(u_n^{-1} \Xvec{} \in  C_{T}) \\
			\one\{u_n^{-1} \Xvec{}  \in C_{0}\} -
			\mathbb{P}(u_n^{-1} \Xvec{} \in  C_{0})
		\end{pmatrix}
	\end{align*}
    it holds that
    \begin{align}
        \label{eq: multivar-CLT-SRD}
        S_n \dconv \mathcal{N}(0, \Sigma),
    \end{align}
	where $\Sigma$ is a covariance matrix. 
	In order to prove \eqref{eq: multivar-CLT-SRD}, it suffices to show $z^{\prime} S_n \dconv \mathcal{N}(0, z^{\prime} \Sigma z)$ for all $z = (z_1,\, z_2)^\prime \in \mathbb{R}^2$.
	
	We can prove this using the same large/small block technique that we used to prove \Cref{thm: asym-normality-SRD}. 
	Hence, the calculations are the same as in \Cref{sec: proof-asym-normality-SRD}.
	Note that the variance of the small blocks are negligible again because $\Var(X + Y) \leq \Var(X) + \Var(Y) + 2 \sqrt{\Var(X) \Var(Y)}$.
	Furthermore, the Lipschitz constant of the functions in the analogue of \eqref{eq: covariance-rewritten} will be bound by $\vert z_1 \vert + \vert z_2 \vert$. 
	This does not affect the remainder of the proof.
	
	Thus, we can consider iid.~copies of 
	\begin{align*}
		T_n(z) 
		= 
		(n p(u_n))^{-1 / 2} \sum_{t = 1}^{\blarge}
		\Big[ 
		&z_1 \Big(\one\{u_n^{-1} \Xvec{}  \in C_{T}\} -
		\mathbb{P}(u_n^{-1} \Xvec{} \in  C_{T}) 
		\Big) \\
		+ 
		&z_2 \Big( 
		\one\{u_n^{-1} \Xvec{}  \in C_{0}\} -
		\mathbb{P}(u_n^{-1} \Xvec{} \in  C_{0})
		\Big)
		\Big].
	\end{align*}
	Now, we have to verify Lindeberg's condition for all $\varepsilon > 0$.
	Similar to our proof in \Cref{sec: proof-asym-normality-SRD}, Chebychev's inequality yields that $k_n \mathbb{E} \big[
		T_n^2(z) \one \big\{ \vert T_n(z) \vert > \varepsilon \big\}
		\big]$ 
  is bounded from above by
	\begin{align*}
		\widetilde{K} \frac{k_n}{n p(u_n)} \blarge^2 \mathbb{P}\big(\vert T_n(z) \vert > \varepsilon\big) 
		\leq
  \widetilde{K} 
		\frac{\blarge^2}{n p(u_n)} \frac{k_n \Var(T_n(z))}{\varepsilon^2}.
	\end{align*}
	for some constant $\widetilde{K} > 0$.
	
	By the same assumption on the block length $\blarge{}$ that we used in \Cref{sec: proof-asym-normality-SRD}, $\blarge^2 / (n p(u_n)) \rightarrow 0$ as $n \rightarrow \infty$.
	Now, the CLT \eqref{eq: multivar-CLT-SRD} follows from $k_n \Var(T_n(z)) \rightarrow z^{\prime} \Sigma z$ as $n \rightarrow \infty$ due to \Cref{thm: asym-normality-SRD}.
	
	Next, let us show that 
	\begin{align}
		\label{eq: CLT-chi-prob-SRD}
		\sqrt{n p(u_n)} \bigg(\widehat \chi_{T ,n}
		- 
		\frac{\mathbb{P}(u_n^{-1} \Xvec{} \in  C_{T})}{p(u_n)}  
		\bigg)
		\dconv \mathcal{N}(0, F^{\prime} \Sigma F),
	\end{align}
	where $F = (1,\ -\chi_{T})^\prime$.
	Let us denote $P_n( C_{T}) =\mathbb{P}(u_n^{-1} \Xvec{} \in  C_{T})$. 
	Then, we can rewrite
	\begin{align*}
		&\sqrt{n p(u_n)} \bigg(\widehat \chi_{T ,n}
		- 
		\frac{
			\mathbb{P}(u_n^{-1} \Xvec{} \in  C_{T})}{p(u_n)}  
		\bigg) \\[2mm]
		=
		&\sqrt{n p(u_n)} \frac{p(u_n)}{\singleest{n}} \bigg( 
		\frac{
			\big[\multest{n} - P_n(C_{T}) \big] p(u_n)
			-
			\big[\singleest{n} - p(u_n) \big] P_n(C_{T})
		}{p(u_n)^2}
		\bigg) \\[2mm]
		=
		&\sqrt{n p(u_n)}  \frac{p(u_n)}{\singleest{n}}
		\bigg( 
		1,\,\ -\frac{ P_n(C_{T})}{p(u_n)}
		\bigg)
		\frac{1}{p(u_n)} \begin{pmatrix}
			\multest{n} - P_n(C_{T}) \\
			\singleest{n} - p(u_n)
		\end{pmatrix} \\[2mm]
		= &\frac{p(u_n)}{\singleest{n}}
		\bigg( 
		1,\,\ -\frac{ P_n(C_{T})}{p(u_n)}
		\bigg)
		\sqrt{\frac{n}{p(u_n)}} \begin{pmatrix}
			\multest{n} - P_n(C_{T}) \\
			\singleest{n} - p(u_n)
		\end{pmatrix}
	\end{align*}
	Then, CLT \eqref{eq: CLT-chi-prob-SRD} follows from Slutsky's theorem and CLT \eqref{eq: multivar-CLT-SRD}.
	Finally, from \Cref{rem: well-definedness of asymptotic variances}\eqref{rem: bias-negligible} we know that
	\begin{align*}
		\sqrt{n p(u_n)} \bigg( 
		\chi_{T} 
		- 
		\frac{\mathbb{P}(u_n^{-1} \Xvec{} \in  C_{T})}{p(u_n)} 
		\bigg)
		= \sqrt{
			n p(u_n) \bigg( 
			\chi_{T} 
			- 
			\mathbb{E}\bigg[\frac{\multest{n}}{p(u_n)} \bigg] 
			\bigg)^2
		}
	\end{align*}
	converges to zero if $n p(u_n)^3 \rightarrow 0$ as $n \rightarrow \infty$. 
	This completes our proof.
\end{proof}

\section{Long Range Dependence as Phase Transition}
\label{sec: conjectures-LRD}

We have seen in the previous sections that our main results, Theorems \ref{thm: asym-normality-SRD} and \ref{thm: asym-normal-chi-SRD}, hold if $\sum_{t = 1}^\infty (2 - \theta_t) < \infty$.
Interestingly, this exact condition is indicative of a max-stable time series being long or short range dependent in the sense of \Cref{def: LRD-KS}.
More precisely, \cite[Theorem 4.3]{makogin2021long} shows that a max-stable time series is LRD in the sense of \Cref{def: LRD-KS} iff $\sum_{t = 1}^\infty (2 - \theta_t) = \infty$.

In this section, we show that the limiting behavior of $\singleest{}$ undergoes a phase transition as the underlying time series becomes LRD.
Moreover, we close this section with an additional discussion of conjectures on how to extend our results even further.

\Cref{prop:pun} shows that the asymptotic variance of the numerator is infinite if $\sum_{t = 1}^\infty (2 - \theta_t) = \infty$ and the typical normalization $\sqrt{n p(u_n)}$ is used.
This begs the question whether a different normalization of the denominator can yield a finite limiting variance in the case where $\sum_{t = 1}^\infty (2 - \theta_t) < \infty$.
And this is indeed the case.

\begin{prop}
	\label{prop: lim-var-pun-LRD}
	Let $(X_t)_{t \in \mathbb{Z}}$	be a max-stable stationary time series with $\alpha$-Fr\'echet margins and pairwise extremal coefficients $(\theta_t)_{t \in \mathbb{Z}}$.
	Furthermore, assume that $2 - \theta_t = C t^{-\delta}$, $t \in \mathbb{Z}$, for some constants $C > 0$ and $\delta \in (0, 1)$.
	Then, for any real sequence $u_n \to \infty$ and any integer sequence $b_n \to \infty$, we have
	\begin{align}
		\label{eq: LRD-variances}
		\lim_{n \to \infty} \Var\left(\sqrt{b_n^\delta p(u_n)} \frac{\singleest{}}{p(u_n)} \right) 
		= 
		\frac{2C}{(1- \delta)(2 - \delta)}
	\end{align}
	
\end{prop}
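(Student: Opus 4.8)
The plan is to reuse the exact variance decomposition already obtained in the proof of \Cref{prop:pun}. Writing $v_n := u_n^{-\alpha}$ and using $\Var(\sqrt{b_n^\delta p(u_n)}\,\singleest{}/p(u_n)) = b_n^{\delta-1}\Var(\sqrt{b_n p(u_n)}\,\singleest{}/p(u_n))$, that decomposition gives
\begin{align*}
	\Var\!\left(\sqrt{b_n^\delta p(u_n)}\,\frac{\singleest{}}{p(u_n)}\right)
	={}& b_n^{\delta-1}\bigl(1-p(u_n)\bigr) \\
	&+ \frac{2}{p(u_n)}\, b_n^{\delta-1}\sum_{t=1}^{b_n-1}\Bigl(1-\frac{t}{b_n}\Bigr)\Cov\bigl(\one\{X_0>u_n\},\one\{X_t>u_n\}\bigr).
\end{align*}
Since $\delta<1$ and $1-p(u_n)\to1$, the first summand vanishes, so the statement reduces to identifying the limit of the weighted covariance sum.

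Next I would compute the covariance in closed form. Since $\PP(X_0\le u_n,X_t\le u_n)=\exp(-\theta_t v_n)$ by \eqref{eq:fdd}, inclusion--exclusion yields $\Cov(\one\{X_0>u_n\},\one\{X_t>u_n\}) = e^{-\theta_t v_n}-e^{-2v_n} = e^{-2v_n}\bigl(e^{Cv_n t^{-\delta}}-1\bigr)$, using $2-\theta_t=Ct^{-\delta}$. Because $t^{-\delta}\le 1$, the exponent $x:=Cv_n t^{-\delta}$ satisfies $0\le x\le Cv_n\to0$ \emph{uniformly} in $t\ge1$, so I would linearize $e^{x}-1 = x + R_{n,t}$ with $|R_{n,t}|\le\tfrac12 x^2 e^x\le \tfrac12 C^2 e^{Cv_n} v_n^2\, t^{-2\delta}$. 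Plugging this in, bounding $e^{-2v_n}/(1-e^{-v_n})\le 2/v_n$ for $n$ large, and noting that $\sum_{t=1}^{b_n-1}t^{-2\delta}=o(b_n^{1-\delta})$ for every $\delta\in(0,1)$ (checking the three regimes $2\delta<1$, $2\delta=1$, $2\delta>1$ separately), the contribution of the remainders $R_{n,t}$ to the weighted sum is $O\bigl(v_n\, b_n^{\delta-1}\sum_{t=1}^{b_n-1}t^{-2\delta}\bigr)=o(1)$.

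For the main term, I would factor out the $t$-independent prefactor and rescale via $b_n^{\delta-1}t^{-\delta}=b_n^{-1}(t/b_n)^{-\delta}$, obtaining, with $g(s):=(1-s)s^{-\delta}$,
\begin{align*}
	\frac{2}{p(u_n)}\, b_n^{\delta-1}\sum_{t=1}^{b_n-1}\Bigl(1-\frac{t}{b_n}\Bigr)e^{-2v_n}Cv_n\, t^{-\delta}
	= \frac{2C v_n e^{-2v_n}}{1-e^{-v_n}}\cdot\frac1{b_n}\sum_{t=1}^{b_n-1} g\!\Bigl(\frac{t}{b_n}\Bigr).
\end{align*}
Here $\frac{v_n e^{-2v_n}}{1-e^{-v_n}}\to1$ as $v_n\to0$, and the remaining average is a Riemann sum for $\int_0^1 g(s)\,\mathrm{d}s=\frac1{1-\delta}-\frac1{2-\delta}=\frac1{(1-\delta)(2-\delta)}$. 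Since $g$ is strictly decreasing and integrable on $(0,1]$, one has $\int_{1/b_n}^{1}g\le \frac1{b_n}\sum_{t=1}^{b_n-1}g(t/b_n)\le\int_0^{(b_n-1)/b_n}g$, and both bounds converge to $\int_0^1 g$. Combining the three pieces gives $\Var(\sqrt{b_n^\delta p(u_n)}\,\singleest{}/p(u_n))\to \frac{2C}{(1-\delta)(2-\delta)}$, which is \eqref{eq: LRD-variances}.

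The main obstacle is that one cannot simply interchange $\lim_n$ with the summation: the summand depends on $t$ over a range growing to infinity, the ratio $\Cov/p(u_n)$ is only asymptotically $2-\theta_t$, and the limiting Riemann integrand $g$ has an integrable singularity at $0$. The point that makes the argument go through — and, notably, requires no relation between $u_n$ and $b_n$ — is that $v_n\to0$ forces the linearization error to be uniformly $O(v_n^2 t^{-2\delta})$ over the entire range $1\le t\le b_n$, while the singularity of $g$ is harmless because the first Riemann block contributes only $(1-1/b_n)b_n^{\delta-1}\to0$.
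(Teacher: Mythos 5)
Your proof is correct and follows essentially the same route as the paper's: the same variance decomposition from \Cref{prop:pun}, the observation that the $b_n^{\delta-1}$ prefactor kills the marginal-variance term, replacement of $\Cov(\one\{X_0>u_n\},\one\{X_t>u_n\})/p(u_n)$ by $2-\theta_t$ with an error that is uniformly controlled in $t$, and identification of the resulting weighted sum as a Riemann sum for $\int_0^1(1-x)x^{-\delta}\,\mathrm{d}x$. The only difference is that you derive the uniform covariance control by an explicit closed-form computation and Taylor expansion instead of citing \Cref{lem:aux-bounds}, and you are somewhat more explicit about the Riemann sum with the integrable singularity at $0$; both are sound.
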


\begin{proof}

	Since $\delta < 1$, the first and second term converge to zero as $n \rightarrow \infty$.
	Due to \Cref{lem:aux-bounds}, we know for $t > 0$ that
	\begin{align*}
		(2 - \theta_t) \bigg(1 - \frac{5u_n^{-2\alpha}}{2p(u_n)} \bigg) 
		\leq
		\frac{\Cov(\one \{X_0 > u_n \}, \one \{X_t > u_n \})}{p(u_n)}
		\leq
		(2 - \theta_t) \bigg(1 + \frac{5u_n^{-2\alpha}}{2p(u_n)} \bigg)
	\end{align*}
	By \Cref{lem:aux-bounds} we know that $u_n^{-2\alpha} / p(u_n) \rightarrow 0$ as $n \rightarrow \infty$.
    Thus,
	\begin{align*}
		&\lim_{n \rightarrow \infty} \Var\Big(\sqrt{n^\delta p(u_n)} \frac{\singleest{}}{p(u_n)} \Big) \\
		&= 
		\lim_{n \rightarrow \infty} 2 b_n^{\delta - 1} \sum_{t = 1}^{b_n - 1} \bigg(1 - \frac{t}{b_n}\bigg)  \frac{\Cov(\one \{X_0 > u_n \}, \one \{X_t > u_n \})}{p(u_n)} \\
		&=
		\lim_{n \rightarrow \infty} 2b_n^{\delta - 1} \sum_{t = 1}^{b_n - 1} \bigg(1 - \frac{t}{b_n}\bigg) (2 - \theta_t) \\
		&= 
		2C \lim_{n \rightarrow \infty} \bigg[
		\sum_{t = 1}^{b_n - 1} 
		\frac{1}{b_n}
		\bigg( \frac{t}{b_n} \bigg)^{-\delta}
		- 
		\sum_{t = 1}^{b_n - 1} 
		\frac{1}{b_n}
		\bigg( \frac{t}{b_n} \bigg)^{-\delta + 1}
		\bigg]\\
		&=
		2C \bigg[\int_0^{1} x^{-\delta} \mathrm{d}x
		-
		\int_0^{1} x^{1-\delta} \mathrm{d}x \bigg]
		= \frac{2C}{(1 - \delta) (2 - \delta)}
	\end{align*}
\end{proof}

\begin{rem}
	\label{rem: lim-var-phun-LRD}
	Clearly, the convergence rate of the variance of $\multest{}$ is bounded by the convergence rate of the variance of $\singleest{}$.
	Hence, there exists a constant $C \in [0, \infty)$ such that
	\begin{align*}
		\limsup_{n \to \infty} \Var\left(\sqrt{b_n^\delta p(u_n)} \frac{\multest{}}{p(u_n)} \right) = C.
	\end{align*}
	Thus, the properly normalized variance of $\multest{}$ is bounded by a finite constant, possibly zero.
\end{rem}

As for our ratio estimator $\widehat \chi_{T , b_n}$, we conjecture that its limiting variance uses the same rate of convergence as the denominator. 
This could possibly shown with a Delta-method argument if assumptions are appropriately chosen such that remainder terms of the resulting Taylor expansion are negligible.

\begin{conj}
	\label{conj: asymp variance of ratio}
	Let $(X_t)_{t \in \mathbb{Z}}$	be a max-stable stationary time series
	with $\alpha$-Fr\'echet margins and pairwise extremal coefficients $(\theta_t)_{t \in \mathbb{Z}}$. 
	Furthermore, assume that $2 - \theta_t = C t^{-\delta}$, $t \in \mathbb{Z}$, for some constants $C > 0$ and $\delta \in (0, 1)$.
	Then, under appropriate assumptions on the asymptotic covariance of $\singleest{n}$ and $\multest{n}$ there exists a constant $K > 0$ such that for any real sequence $u_n \to \infty$ and any integer sequence $b_n \to \infty$, we have
	\begin{align*}
		\lim_{n \rightarrow \infty} \Var \bigg(
		\sqrt{n^\delta p(u_n)} \frac{\multest{n}}{\singleest{n}}
		\bigg) = K.
	\end{align*}
\end{conj}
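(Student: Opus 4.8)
The plan is to run the Delta-method argument sketched just before the statement while tracking second moments throughout. Since a variance is invariant under deterministic shifts, it is equivalent to study $\Var(D_n)$, where $D_n := \sqrt{n^\delta p(u_n)}\,\bigl(\widehat\chi_{T,n} - \chi_{T,n}\bigr)$ and $\chi_{T,n} := \PP\bigl(u_n^{-1}(X_\tau)_{\tau\in T}\in C_T\bigr)/p(u_n)$. The \emph{exact} algebraic identity used in the proof of \Cref{thm: asym-normal-chi-SRD} then gives
\[
  D_n \;=\; \frac{p(u_n)}{\singleest{n}}\,L_n, \qquad L_n := F_n^{\prime} Z_n,
\]
with $F_n := \bigl(1,\, -\PP(u_n^{-1}(X_\tau)_{\tau\in T}\in C_T)/p(u_n)\bigr)^{\prime}$ and $Z_n := \sqrt{n^\delta/p(u_n)}\,\bigl(\multest{n} - \EE\multest{n},\ \singleest{n} - \EE\singleest{n}\bigr)^{\prime}$, so that $\EE L_n = 0$ by stationarity. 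Thus the proof splits into (i) identifying $\lim_n\Var(L_n)$, and (ii) showing that the random prefactor $p(u_n)/\singleest{n}$ does not perturb the limiting variance (this is also where one needs $n^\delta p(u_n)\to\infty$, which should be understood as part of the intended assumptions).

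For step (i) one has $\Var(L_n) = F_n^{\prime}\Cov(Z_n)F_n$ and $F_n\to F := (1,-\chi_T)^{\prime}$ by \Cref{lem:aux-limits}. The $(2,2)$-entry of $\Cov(Z_n)$ converges to $\tfrac{2C}{(1-\delta)(2-\delta)}$ by \Cref{prop: lim-var-pun-LRD} (with $b_n=n$), while its $(1,1)$-entry has a finite limit $\sigma_{11}^2\in[0,\infty)$ by \Cref{rem: lim-var-phun-LRD} --- here one must upgrade the $\limsup$ appearing there to a genuine limit, which requires the power law $2-\theta_t = Ct^{-\delta}$ to propagate to the relevant cross-cluster covariances. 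For the $(1,2)$-entry I would mimic the Riemann-sum computation in the proof of \Cref{prop: lim-var-pun-LRD}: by stationarity, $\Cov(Z_{n,1},Z_{n,2}) = n^{\delta-1}\sum_{|h|<n}(1-\tfrac{|h|}{n})\,\gamma_h(u_n)/p(u_n)$ with $\gamma_h(u_n) := \Cov\bigl(\one\{u_n^{-1}(X_\tau)_{\tau\in T}\in C_T\},\one\{X_h>u_n\}\bigr)$; \Cref{lem:aux-bounds} lets one replace $\gamma_h(u_n)/p(u_n)$ by its limit $\lambda_h$, the $h=0$ term is killed by the factor $n^{\delta-1}$ since $\delta<1$, and assuming $\lambda_h$ inherits the power law $\lambda_h\sim c^{\prime}|h|^{-\delta}$ one obtains $\Cov(Z_{n,1},Z_{n,2})\to \tfrac{2c^{\prime}}{(1-\delta)(2-\delta)}$. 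This is the content hidden in the phrase ``appropriate assumptions on the asymptotic covariance''. Consequently $\Var(L_n)\to F^{\prime}\Sigma F =: K$ with $\Sigma := \lim_n\Cov(Z_n)$.

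Step (ii) is the crux, and the reason the statement is only a conjecture. One has $\singleest{n}/p(u_n)\pconv 1$ as in \Cref{rem: well-definedness of asymptotic variances}, and since $\EE L_n = 0$ it would suffice to show that $\{(p(u_n)/\singleest{n})^2 L_n^2\}_n$ and $\{L_n^2\}_n$ are uniformly integrable (the latter, for instance, if $L_n$ is bounded in $L^{2+\varepsilon}$), for then $\EE\bigl[((p(u_n)/\singleest{n})^2-1)L_n^2\bigr]\to 0$ and $\EE\bigl[(p(u_n)/\singleest{n}-1)L_n\bigr]\to 0$, giving $\Var(D_n)\to\lim\Var(L_n) = K$. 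The difficulty is that $\singleest{n}$ is an average of \emph{dependent} indicators that vanishes with positive probability, so $p(u_n)/\singleest{n}$ a priori has infinite moments; since $L_n^2\le 4 n^\delta/p(u_n)$ deterministically, controlling $\EE\bigl[(p(u_n)/\singleest{n})^2 L_n^2\,\one\{\singleest{n}<\tfrac12 p(u_n)\}\bigr]$ really amounts to a sharp small-ball estimate for $\singleest{n}$, which is delicate in the LRD regime where the usual blocking/mixing tools are unavailable. A rigorous argument would therefore need either such a concentration bound, or a regularised estimator such as $\multest{n}/(\singleest{n}\vee\tfrac12 p(u_n))$ (which coincides with $\widehat\chi_{T,n}$ on an event of probability $1-O((n^\delta p(u_n))^{-1})$), or one simply postulates the above uniform integrability. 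Granted it, writing $p(u_n)/\singleest{n} = 1 + (p(u_n)-\singleest{n})/\singleest{n}$ and a Cauchy--Schwarz estimate complete step (ii).

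Finally, $K = F^{\prime}\Sigma F\ge 0$ because $\Sigma$ is a covariance matrix; strict positivity holds as long as $F\neq 0$ is not in the kernel of $\Sigma$. Concretely, in the degenerate case $\sigma_{11}^2 = 0$ Cauchy--Schwarz forces $c^{\prime}=0$ and then $K = \chi_T^2\,\tfrac{2C}{(1-\delta)(2-\delta)} > 0$ whenever $\chi_T>0$ (an assumption one should add, satisfied as soon as $X_0$ is asymptotically dependent on at least one $X_{t_i}$), while in the generic case $F^{\prime}\Sigma F = 0$ would force an exact linear relation between the limiting centred cluster counts and single exceedances, excluded by the assumed structure. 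I expect step (ii), i.e.\ the negative moments of $\singleest{n}$, to be the only genuine obstacle; step (i) and the positivity are bookkeeping once the structural assumptions are made precise.
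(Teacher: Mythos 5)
First, note that the paper does not prove this statement: it is \Cref{conj: asymp variance of ratio}, and the authors only remark beforehand that a Delta-method argument ``could possibly'' work if assumptions are chosen so that the remainder terms are negligible. There is therefore no proof in the paper to compare against; what you have written is an elaboration of exactly the route the authors sketch, and, to your credit, it locates precisely the obstruction that keeps the statement a conjecture. But as a proof it has a genuine gap, and you say so yourself: in step (ii) you \emph{postulate} the uniform integrability of $(p(u_n)/\singleest{n})^2 L_n^2$ rather than prove it. Since $\singleest{n}=0$ with positive probability (so that $\widehat\chi_{T,n}$ is not even well defined, let alone square-integrable, without a convention or a regularised denominator), and since the LRD regime rules out the blocking/mixing machinery of \Cref{sec: proof-asym-normality-SRD}, the required small-ball or negative-moment estimate for $\singleest{n}$ is the entire difficulty, not a technicality one may assume away. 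A correct write-up would have to either prove such a concentration bound or restate the claim for a truncated estimator such as $\multest{n}/(\singleest{n}\vee\tfrac12 p(u_n))$; as it stands, step (ii) is assumption, not argument.

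Step (i) also leans on facts the paper does not supply: \Cref{rem: lim-var-phun-LRD} only gives boundedness (a $\limsup$) of the normalised variance of $\multest{n}$, not convergence, and your Riemann-sum computation for the cross term requires $\lambda_h \sim c^{\prime} h^{-\delta}$, whereas \Cref{lem:aux-bounds} only yields $|\lambda_h| = O(h^{-\delta})$. You flag both points and fold them into the conjecture's phrase ``appropriate assumptions on the asymptotic covariance,'' which is a fair reading, but it means the limit matrix $\Sigma$ --- and hence $K$ --- is hypothesised rather than derived. Finally, the positivity $K>0$ is not established in the generic case: $F^{\prime}\Sigma F=0$ is dismissed as ``excluded by the assumed structure'' without an argument, and $K$ can genuinely vanish if $\chi_T=0$, so an extra hypothesis is needed there as well. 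In short: the skeleton is correct and matches the authors' intended strategy, but every load-bearing step is conditional, which is exactly why the statement remains a conjecture rather than a theorem.
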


Finally, in light of \Cref{conj: asymp variance of ratio}, we conjecture that if $2 - \theta_t = C t^{-\delta}$, $t \in \mathbb{Z}$, for some constants $C > 0$ and $\delta \in (0, 1)$, we conjecture an  LRD-analogue of our main  result, \Cref{thm: asym-normal-chi-SRD}, i.e. the limiting distribution of 
$$
\sqrt{n^\delta p(u_n)} \bigg(\frac{\multest{n}}{ \singleest{n}}- \chi_T\bigg)
$$ 
is non-degenerate as $n \rightarrow \infty$.
\bibliographystyle{imsart-number} 
\bibliography{lit}       

\appendix

\section{Proof of \Cref{thm: asym-normality-SRD}}
\label{sec: proof-asym-normality-SRD}

In this section, we are going to prove \Cref{thm: asym-normality-SRD}.
To do so, let us first show the following two lemmas.

\begin{lemma} \label{lem:aux-limits}
	Let $(X_t)_{t \in \mathbb{Z}}$	be a max-stable stationary time series
	with $\alpha$-Fr\'echet margins and extremal coefficients $\theta_T$, $T \subset \ZZ$ finite.
	Then, it holds that
	\begin{enumerate}[(a)]
		\item $\lim_{u \to \infty} u^{-\alpha} / \PP(X_0 > u) = 1$.
		\item For every pair of disjoint finite subsets $A,B \subset \mathbb{Z}$, the limit
		\begin{align*}
			\chi_{A,B} := \lim_{u \to \infty} \frac{\Cov(\one\{X_s > u \, \forall s \in A\}, \one\{X_t > u \, \forall t \in B\})}{\PP(X_0 > u)}
		\end{align*}
		exists and is given by 
		\begin{align*}   
			\chi_{A,B} = \sum_{\substack{C \subset A \cup B\\ C \cap A \neq \emptyset, C \cap B \neq \emptyset}} (-1)^{|C|}
			\left( \theta_{C \cap A} + \theta_{C \cap B} - \theta_C \right).
		\end{align*}
	\end{enumerate}
\end{lemma}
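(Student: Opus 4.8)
The plan is to establish the two parts in turn, with part~(a) feeding directly into part~(b). For part~(a) I would apply the finite-dimensional distribution formula \eqref{eq:fdd} to the singleton $T=\{0\}$: since $\EE(W_0^\alpha)=1$ this gives $\PP(X_0\le u)=\exp(-u^{-\alpha})$, hence $\PP(X_0>u)=1-\exp(-u^{-\alpha})$. Writing $z=u^{-\alpha}\to 0$ as $u\to\infty$ and using $1-e^{-z}=z+O(z^2)$ yields $u^{-\alpha}/\PP(X_0>u)=z/(1-e^{-z})\to 1$. This is the only place the normalization $\EE(W_0^\alpha)=1$ enters, and it records the fact $\PP(X_0>u)\sim u^{-\alpha}$ that makes the denominator in the definition of $\chi_{A,B}$ comparable to $u^{-\alpha}$.

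For part~(b) I would first write the covariance as the joint survival probability over $A\cup B$ minus the product of the survival probabilities over $A$ and over $B$. Since $A,B\ne\emptyset$, stationarity gives $\PP(X_s>u\ \forall s\in A)\le\PP(X_0>u)$ and likewise for $B$, so the cross term is bounded by $\PP(X_0>u)^2$, which is $o(\PP(X_0>u))$ by part~(a). Hence it suffices to evaluate $\lim_{u\to\infty}\PP(X_r>u\ \forall r\in A\cup B)/\PP(X_0>u)$, and establishing that this limit exists will be part of the computation.

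To that end I would expand the joint survival probability by inclusion--exclusion on the events $\{X_r\le u\}$, $r\in A\cup B$, and insert \eqref{eq:fdd} with all arguments equal to $u$, so that $\PP(X_r\le u\ \forall r\in C)=\exp(-\theta_C u^{-\alpha})$ (with the convention $\theta_\emptyset=0$). This gives the exact identity
\begin{align*}
  \PP(X_r>u\ \forall r\in A\cup B)=\sum_{C\subseteq A\cup B}(-1)^{|C|}\exp\!\big(-\theta_C u^{-\alpha}\big).
\end{align*}
A first-order Taylor expansion at $u^{-\alpha}=0$, using $\sum_{C\subseteq A\cup B}(-1)^{|C|}=0$ to annihilate the constant term, shows that this probability equals $-u^{-\alpha}\sum_{C\subseteq A\cup B}(-1)^{|C|}\theta_C+O(u^{-2\alpha})$; dividing by $\PP(X_0>u)\sim u^{-\alpha}$ both proves existence of the limit and gives $\chi_{A,B}=-\sum_{C\subseteq A\cup B}(-1)^{|C|}\theta_C$.

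It then remains to recast this single sum in the claimed double-indexed form. I would split the sum over $C\subseteq A\cup B$ according to whether $C$ meets $A$ only, $B$ only, or both, factor $(-1)^{|C|}=(-1)^{|C\cap A|}(-1)^{|C\cap B|}$, and use the elementary alternating-sum identities $\sum_{D\subseteq A}(-1)^{|D|}=0$ and $\sum_{\emptyset\ne D\subseteq A}(-1)^{|D|}=-1$ (and likewise for $B$) to rewrite the ``$A$ only'' and ``$B$ only'' contributions as sums over sets meeting both $A$ and $B$. This bookkeeping converts $-\sum_{C\subseteq A\cup B}(-1)^{|C|}\theta_C$ into $\sum_{C\cap A\ne\emptyset,\,C\cap B\ne\emptyset}(-1)^{|C|}(\theta_{C\cap A}+\theta_{C\cap B}-\theta_C)$, the asserted formula. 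The calculus in the earlier steps is routine; the only step with genuine content---and the one I would write out carefully---is this final combinatorial rearrangement, since it is what produces the structure of the limiting expression.
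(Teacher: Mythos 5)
Your proof is correct, and both parts rest on the same basic toolkit as the paper's (the max-stable c.d.f.\ \eqref{eq:fdd}, inclusion--exclusion, and a first-order Taylor expansion of $x\mapsto e^{-x}$ at $0$); part (a) is identical. For part (b), however, you organize the inclusion--exclusion differently. The paper carries the product term $\PP(X_s>u\ \forall s\in A)\,\PP(X_t>u\ \forall t\in B)$ through the combinatorics: it expands both the joint survival probability and the product, observes that all summands indexed by $C\subseteq A$ or $C\subseteq B$ cancel, and arrives at the \emph{exact} pre-limit identity
$\Cov = \sum_{C\cap A\neq\emptyset,\,C\cap B\neq\emptyset}(-1)^{|C|}\bigl(e^{-u^{-\alpha}\theta_C}-e^{-u^{-\alpha}(\theta_{C\cap A}+\theta_{C\cap B})}\bigr)$,
from which the limit and the stated formula drop out by Taylor expansion. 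You instead discard the product term immediately as $O(\PP(X_0>u)^2)=o(u^{-\alpha})$, expand only the joint survival probability to get $\chi_{A,B}=-\sum_{C\subseteq A\cup B}(-1)^{|C|}\theta_C$, and then perform the combinatorial rearrangement (splitting over $C$ meeting $A$ only, $B$ only, or both, and using $\sum_{\emptyset\neq D\subseteq A}(-1)^{|D|}=-1$) at the very end; that rearrangement is valid and does reproduce the claimed double-indexed formula. Your route is marginally shorter for the lemma in isolation, but it buys less downstream: the paper's exact identity \eqref{eq:cov-2-sumsets} is reused verbatim in \Cref{lem:aux-bounds} to obtain the quantitative $u^{-\alpha}$ and $u^{-2\alpha}$ bounds (exploiting $\theta_{C\cap A}+\theta_{C\cap B}-\theta_C\ge 0$), whereas a purely asymptotic argument would have to be redone there. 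One cosmetic point: your argument tacitly assumes $A,B\neq\emptyset$ (needed for the bound $\PP(X_s>u\ \forall s\in A)\le\PP(X_0>u)$); the degenerate case is trivial since both sides of the asserted formula vanish, but it is worth a word.
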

\begin{proof}
	\begin{itemize}
		\item[1.] Follows directly from $\PP(X_0>u) = 1 - \exp(-u^{-\alpha})$ and a Taylor expansion of the function $x \mapsto \exp(-x)$ around $0$.
		\item[2.] By the inclusion-exclusion principle, we obtain
		\begin{align*}
			&	\Cov(\one\{X_s > u, \, \forall s \in A\}, \one\{X_t > u, \, \forall t \in B\})  \\
			={}& \mathbb{P}(X_r > u \, \forall r \in A \cup B) 
			- \mathbb{P}(X_s > u \, \forall s \in A) \mathbb{P}(X_t > u \, \forall t \in A) \\
			={}& \Big(1 - \mathbb{P}(X_r \leq u \text{ for some } r \in A \cup B)\Big) \\
			&        - \Big(1 - \mathbb{P}(X_s \leq u \text{ for some } s \in A)\Big)
			\Big(1 - \mathbb{P}(X_t \leq u_n \text{ for some } t \in B)\Big) \\
			={}& 1 + \sum_{\emptyset \neq C' \subset A \cup B} (-1)^{|C'|} \belowprob{C^\prime} \\
			& - 
   \bigg(1 + \sum_{\emptyset \neq A' \subset A} (-1)^{|A'|} \belowprob{A^\prime}\bigg)
			\bigg(1 + \sum_{\emptyset \neq B' \subset B} (-1)^{|B'|} \belowprob{B^\prime})\bigg),
		\end{align*}
        where $\belowprob{M} := \mathbb{P}(X_t \leq u \text{ for all } t \in M)$, $M \subset \mathbb{Z}$.
		Expanding the product on the right-hand side of the equation, one can see that all the summands with $C \subset A$ and $C \subset B$ cancel out and the expression simplifies to 
		\begin{align}
			&	\Cov(\one\{X_s > u, \, \forall s \in A\}, \one\{X_t > u, \, \forall t \in B\}) \nonumber \\
			={}& \sum_{\substack{C \subset A \cup B\\ C \cap A \neq \emptyset, \\ C \cap B \neq \emptyset}} \Big[ (-1)^{|C|} \belowprob{C} 
            - 
            (-1)^{|C \cap A| + |C \cap B|} \belowprob{C \cap A} \belowprob{C \cap B} \Big] \nonumber \\
			={}&   \sum_{\substack{C \subset A \cup B\\ C \cap A \neq \emptyset, \\ C \cap B \neq \emptyset}} (-1)^{|C|} \Big( \exp(- u^{-\alpha} \theta_C) -  \exp(- u^{-\alpha} \theta_{C \cap A}) \exp(- u^{-\alpha} \theta_{C \cap B}) \Big). \label{eq:cov-2-sumsets}
		\end{align}
		By applying the Taylor expansion of the function $x \mapsto \exp(-x)$ around $0$ for each exponential term, we finally obtain
		\begin{align*}
			&	\Cov(\one\{X_s > u, \, \forall s \in A\}, \one\{X_t > u, \, \forall t \in B\}) \\
			={}&  u^{-\alpha} \sum_{\substack{C \subset A \cup B\\ C \cap A \neq \emptyset, C \cap B \neq \emptyset}}   (-1)^{|C|} \left( \theta_{C \cap A} + \theta_{C \cap B} - \theta_C \right)  + \mathcal{O}(u^{-2\alpha})
		\end{align*}
		Now, the assertion follows from the first part of the lemma.
	\end{itemize}
\end{proof}

The following lemma will give us two crucial auxiliary results to prove asymptotic normality. 
The first result bounds the covariance of indicators. 
The second result gives a bound on the covariance's convergence rate.

\begin{lemma} \label{lem:aux-bounds}
	Let $(X_t)_{t \in \mathbb{Z}}$	be a max-stable stationary time series
	with $\alpha$-Fr\'echet margins and extremal coefficients $\theta_T$, $T \subset \ZZ$ finite, where we denote the pairwise extremal coefficients by $\theta_h := \theta_{\{0,h\}}$, $h>0$, for short.
	Furthermore, let $A,B \subset \mathbb{Z}$ be finite and disjoint.
	Then, the following bounds hold.
	\begin{enumerate}[(a)]
		\item The covariance of indicators of exceedances decays at rate $u^{-\alpha}$. More precisely,
		\begin{align*}
			&|\Cov(\one\{X_s > u \, \forall s \in A\}, \one\{X_t > u \, \forall t \in B\})| \\
			&\leq{} 
			u^{-\alpha} 2^{|A| + |B| -2} \sum_{s \in A} \sum_{t \in B} (2-\theta_{|s-t|})
		\end{align*}
		\item \label{lem: conv-bound-covariance}The difference between the covariance of indicators of exceedances and the asymptotically equivalent term $\PP(X_0 > u) \chi_{A,B}$ decays at rate $u^{-2\alpha}$.
		More precisely,
		\begin{align*}
			& |\Cov(\one\{X_s > u \, \forall s \in A\}, \one\{X_t > u \, \forall t \in B\})  -  \PP(X_0 > u) \chi_{A,B} | \\
			\leq{}& u^{-2\alpha} \left(|A|+|B|+ \frac 1 2\right) 2^{|A| + |B| - 2} \sum_{s \in A} \sum_{t \in B} (2-\theta_{|s-t|})
		\end{align*}
	\end{enumerate}
\end{lemma}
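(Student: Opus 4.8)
The plan is to start from the exact expansion of the covariance already obtained while proving \Cref{lem:aux-limits}. Writing $v:=u^{-\alpha}$ and $\gamma_C:=\theta_{C\cap A}+\theta_{C\cap B}-\theta_C$, Equation~\eqref{eq:cov-2-sumsets} reads
\begin{align*}
\Cov(\one\{X_s>u\ \forall s\in A\},\one\{X_t>u\ \forall t\in B\}) = \sum_{\substack{C\subset A\cup B\\ C\cap A\neq\emptyset,\ C\cap B\neq\emptyset}} (-1)^{|C|}\big(e^{-v\theta_C}-e^{-v(\theta_{C\cap A}+\theta_{C\cap B})}\big),
\end{align*}
so both parts reduce to (i) bounding each summand in terms of $v$ and $\gamma_C$, and (ii) summing the resulting bound over $C$. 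The key estimate is the claim that, for every admissible $C$, $0\le\gamma_C\le\sum_{s\in C\cap A}\sum_{t\in C\cap B}(2-\theta_{|s-t|})$. I would prove it via the spectral representation $X_t=\max_i\Gamma_i^{-\alpha}W_t^{(i)}$: with $M_A:=\max_{s\in C\cap A}W_s^\alpha$, $M_B:=\max_{t\in C\cap B}W_t^\alpha$ and $\EE[\max_{t\in D}W_t^\alpha]=\theta_D$, the identity $M_A+M_B-\max(M_A,M_B)=\min(M_A,M_B)$ gives $\gamma_C=\EE[\min(M_A,M_B)]\ge0$; the elementary fact $\min(\max_s a_s,\max_t b_t)=\max_{s,t}\min(a_s,b_t)$ for nonnegative reals gives $\min(M_A,M_B)\le\sum_{s\in C\cap A}\sum_{t\in C\cap B}\min(W_s^\alpha,W_t^\alpha)$; and $\EE[\min(W_s^\alpha,W_t^\alpha)]=\EE W_s^\alpha+\EE W_t^\alpha-\theta_{\{s,t\}}=2-\theta_{|s-t|}$ by stationarity.

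For part~(a), subadditivity $\theta_C\le\theta_{C\cap A}+\theta_{C\cap B}$ (which is exactly $\gamma_C\ge0$) makes each summand nonnegative, and $0\le e^{-v\theta_C}-e^{-v(\theta_{C\cap A}+\theta_{C\cap B})}=e^{-v\theta_C}(1-e^{-v\gamma_C})\le 1-e^{-v\gamma_C}\le v\gamma_C$. Hence $|\Cov(\cdot,\cdot)|\le v\sum_C\gamma_C$. Plugging in the claim and interchanging the order of summation, every pair $(s,t)\in A\times B$ is counted once for each $C\subset A\cup B$ with $s,t\in C$, and there are exactly $2^{|A\cup B|-2}=2^{|A|+|B|-2}$ such sets since $A\cap B=\emptyset$; this yields the bound in~(a).

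For part~(b), recall $\PP(X_0>u)=1-e^{-v}$ and, by \Cref{lem:aux-limits}, $\chi_{A,B}=\sum_C(-1)^{|C|}\gamma_C$, so $\Cov(\cdot,\cdot)-\PP(X_0>u)\chi_{A,B}=\sum_C(-1)^{|C|}\big[(e^{-v\theta_C}-e^{-v(\theta_{C\cap A}+\theta_{C\cap B})})-(1-e^{-v})\gamma_C\big]$. By the mean value theorem the first difference equals $v\gamma_C e^{-v\xi_C}$ for some $\xi_C\in[\theta_C,\theta_{C\cap A}+\theta_{C\cap B}]$, and since $\theta_D\le|D|$ (because $\EE W_t^\alpha=1$) we have $\xi_C\le|C|\le|A|+|B|$. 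Each bracket thus equals $v\gamma_C\big(e^{-v\xi_C}-\tfrac{1-e^{-v}}{v}\big)$, and writing $e^{-v\xi_C}-\tfrac{1-e^{-v}}{v}=(e^{-v\xi_C}-1)+(1-\tfrac{1-e^{-v}}{v})$ and using $0\le 1-e^{-v\xi_C}\le v\xi_C$ and $0\le 1-\tfrac{1-e^{-v}}{v}\le\tfrac v2$ bounds it by $(|A|+|B|+\tfrac12)v$ in absolute value. Summing over $C$ exactly as in part~(a) gives the bound in~(b).

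The only genuinely non-routine step is the claim $\gamma_C\le\sum_{s\in C\cap A}\sum_{t\in C\cap B}(2-\theta_{|s-t|})$, i.e.\ dominating the ``multivariate extremal deficit'' $\theta_{C\cap A}+\theta_{C\cap B}-\theta_C$ by a sum of the pairwise ones; passing to the spectral representation and using the $\min/\max$ identity is what makes this transparent. Everything else --- the Taylor/mean value estimates and the combinatorial count of sets $C$ containing a given pair --- is routine.
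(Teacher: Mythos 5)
Your proof is correct, and it reaches the same two bounds with the same constants, but it replaces the paper's key lemma by a different and more self-contained argument. The paper establishes the crucial domination $\theta_{C\cap A}+\theta_{C\cap B}-\theta_C\le\sum_{s\in A\cap C}\sum_{t\in B\cap C}(2-\theta_{|s-t|})$ by citing Equation (15) of \cite{schlather-tawn-02} (inequality \eqref{eq: tawn-bound}) and applying it iteratively to peel off singletons; you instead derive it directly from the spectral representation, observing that $\theta_{C\cap A}+\theta_{C\cap B}-\theta_C=\EE[\min(M_A,M_B)]$ with $M_A=\max_{s\in C\cap A}W_s^\alpha$, $M_B=\max_{t\in C\cap B}W_t^\alpha$, and then using the lattice identity $\min(\max_s a_s,\max_t b_t)=\max_{s,t}\min(a_s,b_t)\le\sum_{s,t}\min(a_s,b_t)$ together with $\EE[\min(W_s^\alpha,W_t^\alpha)]=2-\theta_{|s-t|}$. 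This buys a transparent, citation-free proof of the combinatorial inequality (and simultaneously gives $\gamma_C\ge0$ for free), at the mild cost of invoking the spectral representation with $W\ge0$ rather than working purely with extremal coefficients. The elementary estimates also differ cosmetically: for part (a) the paper writes the difference of exponentials as $\int_{u^{-\alpha}\theta_C}^{u^{-\alpha}(\theta_{C\cap A}+\theta_{C\cap B})}e^{-x}\,\mathrm{d}x$ and bounds the integrand by $1$, while you use $1-e^{-v\gamma_C}\le v\gamma_C$; for part (b) the paper sandwiches $\PP(X_0>u)$ between $u^{-\alpha}-u^{-2\alpha}/2$ and $u^{-\alpha}$ and bounds $\int(1-e^{-x})\,\mathrm{d}x\le\int x\,\mathrm{d}x$, whereas you apply the mean value theorem and split $e^{-v\xi_C}-\tfrac{1-e^{-v}}{v}$ into two pieces bounded by $v\xi_C$ and $v/2$. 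These are equivalent in substance, and your use of $\xi_C\le\theta_{C\cap A}+\theta_{C\cap B}\le|A|+|B|$ recovers exactly the paper's constant $|A|+|B|+\tfrac12$. The final counting step (each pair $(s,t)\in A\times B$ lies in $2^{|A|+|B|-2}$ admissible sets $C$) is identical.
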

\begin{proof}
		We begin to prove the first part. Equation \eqref{eq:cov-2-sumsets} yields
		\begin{align*}
			& |\Cov(\one\{X_s > u, \, \forall s \in A\}, \one\{X_t > u, \, \forall t \in B\})| \\ 
			={}&  \Big| \sum_{\substack{C \subset A \cup B\\ C \cap A \neq \emptyset, C \cap B \neq \emptyset}} (-1)^{|C|} \Big[ \exp(-u^{-\alpha} \theta_C) -  \exp(-u^{-\alpha} \theta_{C \cap A}) \exp(-u^{-\alpha} \theta_{C \cap B}) \Big] \Big| \\
			\leq{}&  \sum_{\substack{C \subset A \cup B\\ C \cap A \neq \emptyset, C \cap B \neq \emptyset}} \Big| \exp(-u^{-\alpha} \theta_C) -  \exp(-u^{-\alpha} \theta_{C \cap A}) \exp(-u^{-\alpha} \theta_{C \cap B}) \Big| \\
			={}& \sum_{\substack{C \subset A \cup B\\ C \cap A \neq \emptyset, C \cap B \neq \emptyset}}  \Big| \int_{u^{-\alpha} \theta_C}^{u^{-\alpha} (\theta_{C \cap A} + \theta_{C \cap B}) } \exp(-x) \, \mathrm{d}x \Big| \\
			\leq{}& \sum_{\substack{C \subset A \cup B\\ C \cap A \neq \emptyset, C \cap B \neq \emptyset}} \int_{u^{-\alpha} \theta_C}^{u^{-\alpha} (\theta_{C \cap A} + \theta_{C \cap B}) } 1 \, \mathrm{d}x \\
			={}& \sum_{\substack{C \subset A \cup B\\ C \cap A \neq \emptyset, C \cap B \neq \emptyset}} u^{-\alpha} \left( \theta_{C \cap A} + \theta_{C \cap B} - \theta_C \right).
		\end{align*}
	
		Now, we notice that, for arbitrary sets $A_1, A_2, A_3 \subset \mathbb{Z}$, we have that, by Equation (15) in \cite{schlather-tawn-02},
		\begin{align} \label{eq: tawn-bound}
			0 \leq{}& \theta_{A_1 \cup A_2} + \theta_{A_3} - \theta_{A_1 \cup A_2 \cup A_3}\\
			\leq{}& \theta_{A_1 \cup A_2} + \theta_{A_3} - \left( \theta_{A_1 \cup A_2} +  \theta_{A_1 \cup A_3} +  \theta_{A_2 \cup A_3} - \theta_{A_1} - \theta_{A_2} - \theta_{A_3}  \right) \notag \\
			={}& (\theta_{A_1} + \theta_{A_3} - \theta_{A_1 \cup A_3}) +  (\theta_{A_2} + \theta_{A_3} - \theta_{A_2 \cup A_3}). \notag
		\end{align}
		Applying this bound iteratively to sets of the type $A_1 = \{s\}$, $A_2 = C \cap A \setminus \{s\}$, $A_3 = C \cap B$ for $s \in A \cap C$ and $A_1 = \{t\}$, $A_2 = C \cap B \setminus \{t\}$, $A_3 = C \cap A$ for $t \in B \cap C$, respectively we can decompose the sets $C \cap A$ and $C \cap B$ into singletons and, using that $\theta_{\{t\}} = 1$ for all $t \in \mathbb{Z}$, we eventually obtain
		\begin{align} 
			\theta_{C \cap A} + \theta_{C \cap B} - \theta_C 
			&\leq 
			\sum_{s \in A \cap C} \sum_{t \in B \cap C} (\theta_{\{s\}} + \theta_{\{t\}} - \theta_{\{s,t\}}) \notag \\
			&= \sum_{s \in A \cap C} \sum_{t \in B \cap C} (2 - \theta_{|s-t|}).
			\label{eq:bound-mv.ec-pw.ec}
		\end{align}
		Thus, we obtain
		\begin{align*}
			|\Cov(\one\{X_s > u \, \forall s \in A\}, \one\{X_t > u \, \forall t \in B\}) | \\
			\leq{} 
			u^{-\alpha}  \sum_{\substack{C \subset A \cup B\\ C \cap A \neq \emptyset, C \cap B \neq \emptyset}} \sum_{s \in A \cap C} \sum_{t \in B \cap C} (2 - \theta_{|s-t|}). 
		\end{align*}
		The assertion follows from the fact that, for each $(s,t) \in A \times B$, there exist
		$2^{|A|-1}$ subsets of $A$ that contain $s$ and $2^{|B|-1}$ subsets of $B$ that contain $t$.
		
		Next, let us prove the second part. 
		Plugging in Equation \eqref{eq:cov-2-sumsets} and the expression for $\chi_{A,B}$
		from the second part of  \Cref{lem:aux-limits}, we obtain 
		\begin{align}
			& |\Cov(\one\{X_s > u \, \forall s \in A\}, \one\{X_t > u \, \forall t \in B\})  -  \PP(X_0 > u) \chi_{A,B} | \notag \\
			={}& \bigg| \sum_{\substack{C \subset A \cup B\\ C \cap A \neq \emptyset, C \cap B \neq \emptyset}} (-1)^{|C|} \Big[ \exp(-u^{-\alpha} \theta_C) -  \exp(-u^{-\alpha} \theta_{C \cap A}) \exp(-u^{-\alpha} \theta_{C \cap B}) \Big] \notag \\
			& \qquad - \PP(X_0 > u_n) \sum_{\substack{C \subset A \cup B\\ C \cap A \neq \emptyset, C \cap B \neq \emptyset}} (-1)^{|C|}
			\left( \theta_{C \cap A} + \theta_{C \cap B} - \theta_C \right) \bigg| \notag \\
			\leq{}& \sum_{\substack{C \subset A \cup B\\ C \cap A \neq \emptyset,  C \cap B \neq \emptyset}} \Big| \exp(-u^{-\alpha} \theta_C) -  \exp(-u^{-\alpha} (\theta_{C \cap A} + \theta_{C \cap B}))  \notag \\
			& \qquad \qquad \qquad \qquad
			- \PP(X_0 > u) \left( \theta_{C \cap A} + \theta_{C \cap B} - \theta_C \right) \Big|.    
            \label{eq: first_step covariance conv rate}
		\end{align}
		From $x - x^2/2 \leq 1 - e^{-x} \leq x$ for all $x >0$, it follows that $u^{-\alpha} - u^{-2\alpha} / 2 \leq \mathbb{P}(X_0 > u) \leq u^{-\alpha}$.
		Consequently, we can bound
		\begin{align*}
			& |\Cov(\one\{X_s > u \, \forall s \in A\}, \one\{X_t > u \, \forall t \in B\})  -  \PP(X_0 > u) \chi_{A,B} | \\
			\leq{}& \max \Big\{ |\Cov(\one\{X_s > u \, \forall s \in A\}, \one\{X_t > u \, \forall t \in B\})  -  (u^{-\alpha} - u^{-2\alpha} / 2) \chi_{A,B} |, \\
			& \qquad |\Cov(\one\{X_s > u \, \forall s \in A\}, \one\{X_t > u \, \forall t \in B\})  -  u^{-\alpha} \chi_{A,B} | \Big\}
        \end{align*}
        
        Then, applying analogous calculations as in \eqref{eq: first_step covariance conv rate} to both parts of the maximum yields
        \begin{align*}
            &|\Cov(\one\{X_s > u \, \forall s \in A\}, \one\{X_t > u \, \forall t \in B\})  -  \PP(X_0 > u) \chi_{A,B} | \\
            \leq{}& \sum_{\substack{C \subset A \cup B\\ C \cap A \neq \emptyset, \\ C \cap B \neq \emptyset}} \Big| \exp(- u^{-\alpha} \theta_C) -  \exp(- u^{-\alpha} (\theta_{C \cap A} + \theta_{C \cap B})) - u^{-\alpha} \left( \theta_{C \cap A} + \theta_{C \cap B} - \theta_C \right) \Big| \\
			& \qquad + \frac 1 2  \sum_{\substack{C \subset A \cup B\\ C \cap A \neq \emptyset, C \cap B \neq \emptyset}} \left( \theta_{C \cap A} + \theta_{C \cap B} - \theta_C \right) u^{-2\alpha} \\
			={}& \sum_{\substack{C \subset A \cup B\\ C \cap A \neq \emptyset, C \cap B \neq \emptyset}} 
			\left[ \int_{u^{-\alpha} \theta_C}^{u^{-\alpha} (\theta_{C \cap A} + \theta_{C \cap B})} (1 - e^{-x}) \, \mathrm{d} x + \frac{\theta_{C \cap A} + \theta_{C \cap B} - \theta_C }{2} u^{-2\alpha} \right] \\
			\leq{}& \sum_{\substack{C \subset A \cup B\\ C \cap A \neq \emptyset, C \cap B \neq \emptyset}} 
			\bigg[ \int_{u^{-\alpha} \theta_C}^{u^{-\alpha} (\theta_{C \cap A} + \theta_{C \cap B})} x \, \mathrm{d} x + \frac {\theta_{C \cap A} + \theta_{C \cap B} - \theta_C}{2} u^{-2\alpha} \bigg]  \\
			={}& \frac 1 2 u^{-2\alpha} \sum_{\substack{C \subset A \cup B\\ C \cap A \neq \emptyset, C \cap B \neq \emptyset}} \left[ (\theta_{C \cap A} + \theta_{C \cap B})^2 - \theta_C^2 + \left( \theta_{C \cap A} + \theta_{C \cap B} - \theta_C \right)   \right] \\
			={}& \frac 1 2 u^{-2\alpha} \sum_{\substack{C \subset A \cup B\\ C \cap A \neq \emptyset, C \cap B \neq \emptyset}} \left[ (\theta_{C \cap A} + \theta_{C \cap B} + \theta_C + 1) \left( \theta_{C \cap A} + \theta_{C \cap B} - \theta_C \right)   \right] \\
			\leq{}& u^{-2\alpha} \left(|A|+|B|+ \frac 1 2\right) \sum_{\substack{C \subset A \cup B\\ C \cap A \neq \emptyset, C \cap B \neq \emptyset}} \left( \theta_{C \cap A} + \theta_{C \cap B} - \theta_C \right),
		\end{align*}
		where we used $ \theta_{C \cap A} + \theta_{C \cap B} - \theta_C =  \theta_{C \cap A} + \theta_{C \cap B} - \theta_{C \cap (A \cup B)} \geq 0$ by inequality \eqref{eq: tawn-bound} and $\theta_M \leq \vert M\vert$ for all finite sets $M \subset \mathbb{Z}$. 
        The assertion follows by employing Equation \eqref{eq:bound-mv.ec-pw.ec} analogously to the proof of the first part of the lemma.   
\end{proof}

This proof follows the same large/small block argument as given in the proof of Theorem 3.2 in \cite{davismikosch09}.
We prove only the limit theorem for $\multest{}$. 
The result for $\singleest{}$ follows analogously.
Let us start by denoting $C := C_T$, $p_0 := \mathbb{P}(u_n^{-1} \Xvec{0} \in C )$ and 
\begin{align*}
	Y_{n, t} := \big(np(u_n)\big)^{-1/2} \Big( \one\{ u_n^{-1} \Xvec{} \in C\} - p_0 \Big), \quad t = 1, \ldots, n.
\end{align*}
Now, we want to consider $k_n$ large blocks of size $\blarge$. To this end, choose $\blarge \in \mathbb{N}$ such that $k_n = n / \blarge$ is a positive integer and $\blarge \to \infty$ as $n \to \infty$. Moreover, for technical reasons that will become clear in the last part of this proof, we assume that $\blarge^2 / (np(u_n)) \rightarrow 0$ as $n \to \infty$. Note that this can always be achieved by letting $\blarge$ grow sufficiently slowly. In addition to $\blarge$ we choose small block sizes $\bsmall \in \mathbb{N}$ such that $\bsmall \to \infty$, but $\bsmall/\blarge \to 0$ as $n \to \infty$.
Using these block sizes we can define blocks
\begin{align*}
	I_{n, i} &:= \big\{ (i - 1) \blarge + 1, \ldots, i \blarge \big\} \\
	\widetilde{I}_{n, i} &:= \big\{ (i - 1) \blarge +  \bsmall + 1, \ldots, i \blarge \big\} \\
	J_{n, i} &:= I_{n, i} \setminus \widetilde{I}_{n, i} = \big\{ (i - 1) \blarge + 1, \ldots, (i-1) \blarge + \bsmall \big\}
\end{align*}
for $i = 1, \ldots, k_n$.
For any index sex $B \subset \mathbb{N}_{0}$ we write $S_n(B) = \sum_{t \in B} Y_{n, t}$.

Next, let us show that the small blocks $J_{n, i}$, $i = 1, \ldots, k_n$, are asymptotically negligible, i.e.
\begin{align}
	\label{eq: small-blocks-negligible}
	\Var \Big( \sum_{i = 1}^{k_n} S_n(J_{n, i}) \Big) \rightarrow 0
\end{align}
as $n \rightarrow \infty$.
Notice that
\begin{align*}
	\Var \Big( \sum_{i = 1}^{k_n} S_n(J_{n, i}) \Big)
	&\leq 
	k_n \Var(S_n(J_{n, 1})) + 2 \sum_{i = 1}^{k_n} \sum_{j = 1}^{i - 1} \Cov \big( S_n(J_{n, i}), S_n(J_{n, j}) \big) \\
	&=: P_1 + P_2.
\end{align*}
Thus, let us show that both $P_1$ and $P_2$ converge to zero as $n \rightarrow \infty$. 
By \Cref{prop:phun} and $\bsmall / \blarge \rightarrow 0$ as $n \rightarrow \infty$, it follows that that
\begin{align*}
	P_1 = \frac{\bsmall}{\blarge} \Var \bigg(
	\sqrt{\bsmall p(u_n)} \frac{\multest{\bsmall}}{p(u_n)}
	\bigg)
	\rightarrow 0
\end{align*}
as $n \rightarrow \infty$.
Furthermore, we get 
\begin{align*}
	P_2 
	&=
	2 \sum_{i = 1}^{k_n} \sum_{j = 1}^{i - 1} \Cov \big( S_n(J_{n, i}), S_n(J_{n, j}) \big) \\
	&= 
	\frac{2}{n p(u_n)} \sum_{i = 1}^{k_n} \sum_{j = 1}^{i - 1}  \sum_{s \in J_{n, i}} \sum_{t \in J_{n, j}} \Cov \big( \one \big\{ u_n^{-1}\Xvec{s} \in C \big\}, \one \big\{ u_n^{-1}\Xvec{t} \in C \big\}\big)
\end{align*}
As the index set $T$ is bounded and the block distance $\blarge - \bsmall$ between two subsequent small blocks tends to infinity as $n \to \infty$, the index set $s+T$ and $t + T$ in the sum above are all disjoint for sufficiently large $n$.
Hence, we can apply the first part of  \Cref{lem:aux-limits}. 
Counting the pairwise distances between elements of the blocks $J_{n, i}$ and $J_{n, j}$ we get that there is a constant $K > 0$ such that
\begin{align*}
	P_2
	\leq 
	2 K \frac{u_n^{-\alpha}}{p(u_n)} \frac{\bsmall}{\blarge} \sum_{h = \blarge - \bsmall}^{n} (2 - \theta_h)
	\rightarrow 0
\end{align*}
as $n \rightarrow\infty$ since $\sum_{h = 1}^{\infty} (2 - \theta_h) < \infty$. 
Consequently, we have proven \eqref{eq: small-blocks-negligible} and the limiting distribution of $S_n$ and $\sum_{i = 1}^{k_n} S_n(\widetilde{I}_{n, i})$ are equal if the limit exists.

Now, let us consider iid.~copies $\widetilde{S_n}(\widetilde{I}_{n, l})$, $l = 1, \ldots, k_n$, of $S_n(\widetilde{I}_{n, 1})$.
Using a telescoping series we get for any $t \in \mathbb{R}$ that
\begin{align}
	&\bigg\vert
	\mathbb{E} \bigg[
	\prod_{l = 1}^{k_n} e^{itS_n(\widetilde{I}_{n, l})}
	\bigg] 
	-
	\mathbb{E} \bigg[
	\prod_{l = 1}^{k_n} e^{it\widetilde{S}_n(\widetilde{I}_{n, l})}
	\bigg]
	\bigg\vert \notag \\
	&= 
	\bigg\vert
	\sum_{l = 1}^{k_n}
	\mathbb{E}
	\bigg(  \prod_{s = 1}^{l - 1} e^{itS_n(\widetilde{I}_{n, s})} \bigg)
	\bigg( e^{itS_n(\widetilde{I}_{n, l})} - e^{it\widetilde{S}_n(\widetilde{I}_{n, l})}  \bigg)
	\bigg(  \prod_{s = l + 1}^{k_n} e^{it\widetilde{S}_n(\widetilde{I}_{n, s})} \bigg)
	\bigg\vert \notag \\
	&\leq 
	\sum_{l = 1}^{k_n}
	\mathbb{E}
	\bigg\vert
	\bigg(  \prod_{s = 1}^{l - 1} e^{itS_n(\widetilde{I}_{n, s})} \bigg)
	\bigg( e^{itS_n(\widetilde{I}_{n, l})} - e^{it\widetilde{S}_n(\widetilde{I}_{n, l})}  \bigg)
	\bigg\vert \notag \\
	&=
	\sum_{l = 1}^{k_n} \bigg\vert \Cov\bigg( 
	\exp \bigg\{ it \sum_{s = 1}^{l - 1} S_n(\widetilde{I}_{n, s})\bigg\},
	\exp \Big\{ it S_n(\widetilde{I}_{n, l}) \Big\}
	\bigg) \bigg\vert \notag\\
	&\leq
	\sum_{l = 1}^{k_n} \bigg[
	\Big\vert \Cov\bigg( 
	\cos \Big( t \sum_{s = 1}^{l - 1} S_n(\widetilde{I}_{n, s})\Big),
	\cos \Big( t S_n(\widetilde{I}_{n, l}) \Big)
	\bigg) \Big\vert \notag\\
	&\qquad+
	\Big\vert \Cov\bigg( 
	\cos \Big( t \sum_{s = 1}^{l - 1} S_n(\widetilde{I}_{n, s})\Big),
	\sin \Big( t S_n(\widetilde{I}_{n, l}) \Big)
	\bigg) \Big\vert \notag\\
	&\qquad+
	\Big\vert \Cov\bigg( 
	\sin \Big( t \sum_{s = 1}^{l - 1} S_n(\widetilde{I}_{n, s})\Big),
	\cos \Big( t S_n(\widetilde{I}_{n, l}) \Big)
	\bigg) \Big\vert \notag\\
	&\qquad+
	\Big\vert \Cov\bigg( 
	\sin \Big( t \sum_{s = 1}^{l - 1} S_n(\widetilde{I}_{n, s})\Big),
	\sin \Big( t S_n(\widetilde{I}_{n, l}) \Big)
	\bigg) \Big\vert 
	\bigg]
	\label{eq: diff-of-char-fcts}
\end{align}
Next, let us bound each of those covariances.
To do so, let us rewrite
\begin{align}
	&\Cov\bigg( 
	\cos \Big( t \sum_{s = 1}^{l - 1} S_n(\widetilde{I}_{n, s})\Big),
	\cos \Big( t S_n(\widetilde{I}_{n, l}) \Big)
	\bigg) \notag \\
	&= 
	\Cov \bigg( 
	f \Big\{ \frac{t}{\sqrt{n p(u_n)}} \one \{  u_n^{-1} \Xvec{} \in C\} ,\ t \in \bigcup_{s = 1}^{l - 1} \widetilde{I}_{n, s}\Big\}, \notag\\
	&\qquad \qquad \widetilde{f} \Big\{ \frac{t}{\sqrt{n p(u_n)}} \one\{  u_n^{-1} \Xvec{} \in C\} ,\ t \in \widetilde{I}_{n, l}\Big\}
	\bigg) \label{eq: covariance-rewritten}
\end{align}
where $f(x_1, \ldots, x_u) = \cos\big(\sum_{k = 1}^{u} x_k - A_1\big)$, $\widetilde{f}(x_1, \ldots, x_v) = \cos\big(\sum_{k = 1}^{v} x_k - A_2\big)$ for some constants $A_1, A_2 > 0$. 
Clearly, $f$ and $\widetilde{f}$ are bounded Lipschitz functions with Lipschitz constants $\Lip(f), \Lip(\widetilde{f}) \leq 1$.
Furthermore, we know that the sequence $\big\{ X_t, t \in \mathbb{Z} \big\}$ is associated as a max-stable process. 
Now, it is easy to show that $\big\{ \Xvec{}, t \in \mathbb{Z} \big\}$ is associated as well by applying the definition of association.
Furthermore, it follows from \cite[Thm.~1.1.18(d)]{Bulinskii.2007} that $$\big\{  \frac{t}{\sqrt{n p(u_n)}} \one \{  u_n^{-1} \Xvec{} \in C\}, t \in \mathbb{Z} \big\}$$ is associated.
Thus, we can apply Theorem 1.5.3 from \cite{Bulinskii.2007} and bound covariance \eqref{eq: covariance-rewritten} by
\begin{align*}
	&\frac{t^2}{n p(u_n)} 
	\sum_{k_1 \in \bigcup_{s = 1}^{l - 1} \widetilde{I}_{n, s}} \sum_{k_2 \in \widetilde{I}_{n, l}} 
	\bigg\vert
	\Cov \Big( 
	\one \big\{ u_n^{-1} \Xvec{k_1} \in C \big\},
	\one \big\{ u_n^{-1} \Xvec{k_2} \in C \big\}
	\Big)
	\bigg\vert \\
	&\leq
	2Kt^2 \frac{u_n^{-\alpha}}{p(u_n)} \frac{\blarge - \bsmall}{n} 
	\sum_{k = 0}^{l - 1} \sum_{j = 1}^{\blarge - \bsmall}
	(2 - \theta_{k \blarge + \bsmall + j}),
\end{align*}
where $K$ is a positive constant.
Notice that we have used \Cref{lem:aux-bounds} here  and counted the pairwise distances again.
Thus, we can bound the sum of covariances \eqref{eq: diff-of-char-fcts} via
\begin{align*}
	\eqref{eq: diff-of-char-fcts}
	&\leq 8Kt^2 \frac{u_n^{-\alpha}}{p(u_n)} \frac{\blarge - \bsmall}{n} 
	\sum_{l = 1}^{k_n }\sum_{k = 0}^{l - 1} \sum_{j = 1}^{\blarge - \bsmall}
	(2 - \theta_{k \blarge + \bsmall + j}) \\
	&\leq
	8Kt^2 \frac{u_n^{-\alpha}}{p(u_n)} \frac{\blarge - \bsmall}{n} 
	k_n \sum_{j = \bsmall}^{n}
	(2 - \theta_{j}) \\
	&=
	8Kt^2 \frac{u_n^{-\alpha}}{p(u_n)}
	\bigg( 1 - \frac{\bsmall}{\blarge} \bigg)
	\sum_{j = \bsmall}^{n}
	(2 - \theta_{j}) \rightarrow 0
\end{align*}
as $n\rightarrow \infty$.
Therefore, $\sum_{l = 1}^{k_n} S_n(\widetilde{I}_{n, l})$ and $\sum_{l = 1}^{k_n} \widetilde{S_n}(\widetilde{I}_{n, l})$ have the same limiting distributions (provided that the limit exists).

Now, let $\widetilde{S}_n(I_{n, l})$, $l = 1, \ldots, k_n$ be iid copies of $S_n(I_{n, 1})$.
By the same arguments as before it suffices to show
\begin{align*}
	\sum_{l = 1}^{k_n} \widetilde{S_n}(I_{n, l}) \dconv \mathcal{N} (0, \sigma_{T}^2)
\end{align*}
as $n \rightarrow \infty$. 
First, notice that the limiting variance is correct by \Cref{prop:phun} since 
\begin{align*}
	\Var \bigg( \sum_{l = 1}^{k_n} \widetilde{S_n}(I_{n, l}) \bigg) 
	= 
	k_n \Var\big( \widetilde{S_n}(I_{n, 1}) \big)
	=
	\Var \bigg( 
	\sqrt{\blarge p(u_n)} \frac{\multest{}}{p(u_n)}
	\bigg).
\end{align*}
Thus, we need to check the Lindeberg condition of the CLT for triangular arrays of iid.~mean-zero random variables, i.e.~we need to verify 
\begin{align*}
	k_n \mathbb{E} \Big[
	\widetilde{S_n}(I_{n, 1})^{2}\,
	\one \big\{ \vert \widetilde{S_n}(I_{n, 1}) \vert > \varepsilon \big\}
	\Big] \rightarrow 0, \qquad (n \rightarrow \infty),
\end{align*}
for any $\varepsilon > 0$.
Using Chebyshev's inequality and the fact that $|\widetilde{S_n}(I_{n, 1})| \leq \blarge / \sqrt{n p(u_n)} $, we get
\begin{align*}
	k_n \mathbb{E} \Big[
	\widetilde{S_n}(I_{n, 1})^{2}\,
	\one \big\{ \vert \widetilde{S_n}(I_{n, 1}) \vert > \varepsilon \big\}
	\Big]
	\leq 
	\frac{\blarge^2}{n p(u_n)} \frac{k_n \Var\big(\widetilde{S_n}(I_{n, 1})\big)}{\varepsilon^2}.
\end{align*}
Since we have assumed that $\blarge^2 / (np(u_n)) \rightarrow 0$ as $n \rightarrow \infty$, the RHS converges to zero as $n \rightarrow \infty$.
This completes the proof. \qed

\end{document}